\def\thispapertitle {Hurwitz matrices of doubly infinite series}
\definecolor{mybrown}{HTML}{D02000}
\let\vvarkappa\varkappa
\let\varkappa\vvarkappa
\newcommand{\navy}{}%\color{Navy}}
\newcommand{\gree}{}%\color[HTML]{006500}}
\newcommand{\Arg}{\operatorname{Arg}}
\newcommand{\Ln}{\operatorname{Ln}}
\newcommand{\sign}{\operatorname{sign}}
\renewcommand{\le}{\leqslant}
\renewcommand{\ge}{\geqslant}
\newcommand{\ww}{\quad\text{where}\quad}
\newcommand{\an}{\quad\text{and}\quad}
\renewcommand{\Re}{\operatorname{Re}}
\renewcommand{\Im}{\operatorname{Im}}
\newtheorem{theorem}{\usekomafont{subparagraph}Theorem}
\newtheorem{lemma}[theorem]{\usekomafont{subparagraph}Lemma}
\newtheorem{corollary}[theorem]{\usekomafont{subparagraph}Corollary}
\theoremstyle{definition}
\newtheorem{fact}{\usekomafont{subparagraph}Fact}
\newtheorem{remark}[theorem]{\usekomafont{subparagraph}Remark}
\newtheorem*{remark*}{\usekomafont{subparagraph}Remark}
\newtheorem*{definition}{\usekomafont{subparagraph}Definition}
\numberwithin{paragraph}{section}
\def\blfootnote{\gdef\@thefnmark{}\@footnotetext}
\title{\thispapertitle%
    \thanks{This work was supported by the Einstein Foundation Berlin and the European Research
        Council under the European Union's Seventh Framework Programme (FP7/2007--2013)/ERC
        grant agreement no.~259173.} }
\author{\gree\fontfamily{PTSansCaption-TLF}\selectfont\normalsize\vspace{1em}Alexander
    Dyachenko%
    \footnote{\emph{TU-Berlin, Institut f\"ur Mathematik, Sekr.~MA 4-2, Straße des 17. Juni 136,
        10623 Berlin, Germany.\newline        
        \href{mailto:diachenko@sfedu.ru}{diachenko@sfedu.ru},
        \href{mailto:dyachenk@math.tu-berlin.de}{dyachenk@math.tu-berlin.de} }}
}
\date{\gree\fontfamily{PTSansCaption-TLF}\selectfont\vspace{-.4em}{\small\today\vskip -2.5em}}
\renewcommand{\tocbasic@@before@hook}{\vspace{-1.5em}\parskip0pt\itemsep0pt\setstretch{1}}
\newcommand{\Cp}{\ensuremath{\mathbb{C}_+}}
\begin{document}
\automark{section}
\maketitle

\setstretch{1.2}

\begin{abstract}
    This paper aims at extending the criterion that the quasi-stability of a polynomial is
    equivalent to the total nonnegativity of its Hurwitz matrix. We give a complete description
    of functions generating doubly infinite series with totally nonnegative Hurwitz and
    Hurwitz-type matrices (in a Hurwitz-type matrix odd and even rows come from two distinct
    power series). The corresponding result for singly infinite series is known: it is based on
    a certain factorization of Hurwitz-type matrices, which is absent in the doubly infinite
    case. A necessary condition for total nonnegativity of generalized Hurwitz matrices follows
    as an application.
    % \blfootnote{\vphantom{\raisebox{.2em}{$\bigg|$}}
    \\[10pt]
        \textbf{2010 Mathematics Subject
            Classification:}
        30C15 $\cdot$ 30B10 $\cdot$ 40A05.
    % }
    %     \blfootnote{
        \\[5pt]
        \textbf{Keywords:} Total positivity $\cdot$ P\'olya frequency sequence $\cdot$
        Hurwitz matrix $\cdot$ Generalized Hurwitz matrix $\cdot$ Doubly infinite series.
    % }
    %\vspace{1em}
\end{abstract}
\section{Introduction}
\begin{definition}
    A doubly (\emph{i.e.} two-way) infinite sequence $\big(f_n\big)_{n=-\infty}^\infty$ is
    called \emph{totally positive} if all minors of the (four-way infinite) Toeplitz matrix
    \begin{equation*}
        % \big(f_{n-j}\big)_{n,j=-\infty}^\infty =
        \begin{pmatrix}
            \ddots & \vdots & \vdots & \vdots & \vdots & \vdots & \iddots\\
            \hdots & f_0 & f_1 & f_2 & f_3 & f_4 & \hdots\\
            \hdots & f_{-1} & f_0 & f_1 & f_2 & f_3 & \hdots\\
            \hdots & f_{-2} & f_{-1} & f_0 & f_1 & f_2 & \hdots\\
            \hdots & f_{-3} & f_{-2} & f_{-1} & f_0 & f_1 & \hdots\\
            \hdots & f_{-4} & f_{-3} & f_{-2} & f_{-1} & f_0 & \hdots\\
            \iddots & \vdots & \vdots & \vdots & \vdots & \vdots & \ddots
        \end{pmatrix}
        \equalscolon T(f),
        \ww
        f(z)\colonequals\sum_{\mathclap{n=-\infty}}^\infty \ f_nz^n
    \end{equation*}
    are nonnegative (\emph{i.e.} the matrix is \emph{totally nonnegative}).
\end{definition}
Note that the indexation of four-way infinite matrices affects the multiplication. Here we adopt
the following convention: the uppermost row and the leftmost column, which appear in
representations of such matrices, have the index~$1$ unless another is stated explicitly.

The total nonnegativity of the corresponding Toeplitz matrices is a characteristic property of
power series converging to functions of a very specific form:
\begin{theorem}[{Edrei~\cite{Edrei}%
        \footnote{An earlier publication~\cite{AESW} studies the singly infinite case. Under
            additional conditions, for example~$\beta_{-\mu}^{-1}<1<\beta_\mu$ for each~$\mu>0$,
            the representation~\eqref{eq:funct_gen_dtps} is unique in the annulus of
            convergence, see \emph{e.g.}~\cite[Theorem~4]{Ganzburg}.}}]
    \label{th:E-AESW}
    Let a non-trivial sequence~$\big(f_n\big)_{n=-\infty}^\infty$ be totally positive. Then,
    unless~$f_n=f_0^{1-n}f_1^{n}$ for every~$n\in\mathbb{Z}$, the series~$f(z)$ converges in
    some annulus to a function with the following % Weierstra\ss\
    representation
    \begin{equation}\label{eq:funct_gen_dtps}
        Cz^je^{Az+\frac{A_0}z}\cdot
        \frac{\prod_{\mu>0} \left(1+\frac{z}{\beta_\mu}\right)}
        {\prod_{\nu>0} \left(1-\frac{z}{\delta_\nu}\right)}\cdot
        \frac{\prod_{\mu<0} \left(1+\frac{z^{-1}}{\beta_\mu}\right)}
        {\prod_{\nu>0} \left(1-\frac{z^{-1}}{\delta_\nu}\right)},
    \end{equation}
    where the products converge absolutely, \(j\) is integer and the coefficients satisfy
    \(A,A_0\ge 0\), ~\(C,\beta_\mu,\delta_\nu>0\) for all~\(\mu,\nu\). The converse is also
    true: every function of this form generates (\emph{i.e.} its Laurent coefficients give) a
    doubly infinite totally positive sequence.
\end{theorem}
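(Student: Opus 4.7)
My plan is to split the proof into the converse direction --- which is the easier assertion --- and the harder forward direction, which I would reduce to the singly infinite Aissen--Edrei--Schoenberg--Whitney theorem via a Wiener--Hopf-type factorization of the symbol.

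For the converse, I would observe that each elementary factor in~\eqref{eq:funct_gen_dtps} is either a singly infinite TP generating function or the image of one under $z\mapsto 1/z$: the factors $e^{Az}$, $1+z/\beta_\mu$ with $\mu>0$, and $\bigl(1-z/\delta_\nu\bigr)^{-1}$ with $\nu>0$ generate one-sided P\'olya frequency sequences supported on nonnegative indices, while their $1/z$-counterparts generate one-sided PF sequences supported on nonpositive indices. Since multiplying Laurent series corresponds to convolving their coefficient sequences and the Toeplitz symbol map satisfies $T(fg)=T(f)T(g)$ whenever the products make sense, the claim follows from the fact that the product of two TP matrices is TP. The prefactor $Cz^{j}$ is a positive scaling composed with an index shift, which obviously preserves TP.

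For the forward direction, I would first establish convergence of $f(z)$ in a true annulus. The $2\times 2$ minors of $T(f)$ yield log-concavity $f_n^2\ge f_{n-1}f_{n+1}$ for every $n\in\mathbb{Z}$, so the ratios $r_n\colonequals f_{n+1}/f_n$ (where defined) are non-increasing in $n$; together with non-triviality and the exclusion of geometric sequences, this forces $\lim_{n\to-\infty}r_n>\lim_{n\to+\infty}r_n$, and the resulting gap is precisely an annulus $1/\rho^-<|z|<1/\rho^+$ of convergence. Next, I would construct a Wiener--Hopf factorization $f(z)=f^+(z)f^-(z)$, where $f^+(z)=\sum_{n\ge 0}a_nz^n$ is analytic in $|z|<1/\rho^+$ and $f^-(z)=\sum_{n\le 0}b_nz^n$ is analytic in $|z|>1/\rho^-$, with both $(a_n)_{n\ge 0}$ and $(b_{-n})_{n\ge 0}$ singly infinite TP. My approach is to work with principal truncations of $T(f)$: each is a finite TP matrix, admitting a bidiagonal/$LU$-type decomposition whose factors can be identified with truncations of Toeplitz matrices of one-sided sequences; the annular bounds obtained above let one stabilise these factorizations in the size of the truncation, and minors of the one-sided factors pass to the limit with nonnegative values. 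Finally, applying AESW to $f^+$ and to $z\mapsto f^-(1/z)$ produces two Hadamard-type product representations; substituting $z\mapsto 1/z$ back in the second one converts it into the negative-power block of~\eqref{eq:funct_gen_dtps}, and combining the two monomial prefactors into a single $Cz^{j}$ and the two exponentials into $e^{Az+A_0/z}$ yields exactly the claimed form.

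The main obstacle is the Wiener--Hopf factorization in the middle step. In the singly infinite case no such factorization is needed, because the one-sided Toeplitz structure is already adapted to AESW; here, by contrast, the identity $T(f)=T(f^+)T(f^-)$ cannot be read off the structural form of $T(f)$, and both the convergence of the truncated factorizations and the preservation of total nonnegativity across the limit must be verified by hand --- essentially by controlling minors of the factors in terms of minors of $T(f)$. This is the point where the two-sided theory genuinely departs from the one-sided one, and it is precisely the difficulty that the abstract flags as the reason the present paper is needed.
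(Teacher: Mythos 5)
This theorem is not proved in the paper at all: it is cited to Edrei~\cite{Edrei} (with~\cite{AESW} for the singly infinite antecedent), so there is no ``paper's own proof'' to compare against. I will therefore evaluate your proposal on its own merits.

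Your converse direction is sound and standard: each elementary factor of~\eqref{eq:funct_gen_dtps} generates a P\'olya frequency sequence supported on a half-line, $T(fg)=T(f)T(g)$ for doubly infinite Toeplitz matrices, and Cauchy--Binet preserves total nonnegativity. The opening of your forward direction (log-concavity of $f_n$ from $2\times2$ minors, monotone ratios $r_n=f_{n+1}/f_n$, a genuine annulus once geometric sequences are excluded) is also essentially right, though you should handle separately the case where $f_n=0$ for some but not all $n$, since then some $r_n$ are not finite positive and your monotonicity argument needs a different phrasing; in that situation the support is a half-line and the result reduces directly to AESW.

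The genuine gap is in your Wiener--Hopf step. You claim that finite truncations of $T(f)$ ``admit a bidiagonal/$LU$-type decomposition whose factors can be identified with truncations of Toeplitz matrices of one-sided sequences.'' That identification is false: the $LU$ (or bidiagonal) factors of a finite totally nonnegative Toeplitz matrix are triangular and totally nonnegative, but they are \emph{not} Toeplitz unless all $2\times2$ minors vanish --- one sees this already in the $3\times3$ case, where the $(3,2)$ entry of $L$ equals the $(2,1)$ entry if and only if $f_{-1}^2=f_0f_{-2}$. Consequently the truncated factorizations you want to stabilize in $n$ are not approximations to $T(f^+)T(f^-)$ in any useful sense, and the whole limiting argument never gets started. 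What is actually needed --- a factorization $f=z^j f^+(z)\,f^-(1/z)$ where \emph{both one-sided coefficient sequences are totally positive} --- is not a formal consequence of $T(f)$ being TP plus an analytic Wiener--Hopf split of the symbol; it is the heart of Edrei's theorem and requires function-theoretic input (Edrei argues on the function $f$ directly, controlling zeros, poles and exponential type via Hadamard/Nevanlinna-style estimates, rather than through matrix truncations). You should either cite Edrei for this step, or replace the truncation scheme by an argument at the level of the analytic function $f$: show $f$ is nonzero and of the right growth off the negative real axis in the annulus, locate its zeros and poles on $\mathbb{R}$, and then assemble the two one-sided AESW factors from that zero/pole data rather than from an $LU$ decomposition.
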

Recent publications~\cite{HoltzTyaglov,Dyachenko14} have shown that a relevant criterion holds
for the so-called Hurwitz-type matrices, which are built from two Toeplitz matrices and have
applications to questions of stability.
\begin{definition}
    The \emph{Hurwitz-type matrix} is a matrix of the form
    \begin{equation}\label{eq:Hpq_def}
        H(p,q)={\begin{pmatrix}
                \ddots & \vdots & \vdots & \vdots & \vdots & \vdots & \vdots & \iddots\\
                \hdots & a_0& a_1& a_2 & a_3 & a_4 & a_5 & \hdots\\
                \hdots & b_0& b_1& b_2 & b_3 & b_4 & b_5 & \hdots\\
                \hdots & a_{-1} & a_0 & a_1 & a_2 & a_3 & a_4 & \hdots\\
                \hdots & b_{-1} & b_0 & b_1 & b_2 & b_3 & b_4 & \hdots\\
                \hdots & a_{-2} & a_{-1} & a_0 & a_1 & a_2 & a_3 & \hdots\\
                \iddots & \vdots & \vdots & \vdots & \vdots & \vdots & \vdots & \ddots
            \end{pmatrix}},
    \end{equation}
    where~$p(z)=\sum_{k=-\infty}^\infty\,a_kz^k$ and~$q(z)=\sum_{k=-\infty}^\infty\,b_kz^k$ are
    formal power series.
\end{definition}
\begin{definition}
    The \emph{Hurwitz matrix} corresponding to a power
    series~$f(z)=\sum_{k=-\infty}^\infty\,f_kz^k$ is the Hurwitz-type matrix~$H(p,q)$ in which
    the series~$p(z)$ and~$q(z)$ are defined by~$f(z)=q(z^2)+zp(z^2)$.
\end{definition}
The main goal of the present study is to determine conditions on the power series~$p(z)$
and~$q(z)$ necessary and sufficient for total nonnegativity of the matrix~$H(p,q)$: like in the
case of singly infinite series, one of the conditions is that the ratio~$\frac{q(z)}{p(z)}$ maps
the upper half-plane~$\Cp\coloneqq\{z\in\mathbb C:\Im z>0\}$ into itself. To give a more precise
statement, let us introduce the following class of functions:
\begin{definition}
    A function~$F(z)$ is called an $\mathcal{S}$-function if it is holomorphic and
    satisfies~$\Im z\cdot\Im F(z)\ge 0$ for all~$z\not\le0$ and if additionally~$F(z)\ge0$
    wherever~$z>0$.
\end{definition}
The straightforward corollary of the definition is that~$F(\overline z)=\overline{F(z)}$ for
each~$\mathcal S$-function~$F(z)$ wherever it is regular. We need a subclass of
$\mathcal{S}$-functions introduced in the following lemma.
\begin{lemma}\label{lemma:prop_S1}
    Let~$p(z)$ and~$q(z)$ be two functions of the form~\eqref{eq:funct_gen_dtps}; then their
    ratio~$F(z)=\frac{q(z)}{p(z)}$ is an $\mathcal S$-function if and only if there exists a
    function~$g(z)$ of the form~\eqref{eq:funct_gen_dtps}, such that
    \[
    \begin{gathered}
        \frac{p(z)}{g(z)}= a_0{\prod_{\nu>0} \left(1+\dfrac{z}{\alpha_\nu}\right)}
        {\prod_{\nu<0}\left(1+\dfrac{z^{-1}}{\alpha_\nu}\right)}, \quad \frac{q(z)}{g(z)}=
        b_0{\prod_{\mu>0} \left(1+\dfrac{z}{\beta_\mu}\right)}
        {\prod_{\mu<0}\left(1+\dfrac{z^{-1}}{\beta_\mu}\right)} \an
        \\
        0<\cdots<\alpha_{-2}^{-1}<\beta_{-1}^{-1}<\alpha_{-1}^{-1}
        <\beta_{1}<\alpha_{1}<\beta_{2}<\alpha_2<\cdots;
    \end{gathered}
    \]
    if the sequence of~$\mu$ terminates on the left at~$\mu_0$, then~$\beta_{\mu_0}$ can be
    positive or zero%
    \footnote{When~$\beta_{\mu_0}=0$, the corresponding
        factor~$\Big(1+\frac{z}{\beta_{\mu_0}}\Big)$ needs to be replaced by the factor~$z$.}
    and the sequence of~$\nu$ also terminates on the left at~$\mu_0$.
\end{lemma}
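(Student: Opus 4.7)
The plan is to treat both directions by reducing to the classical criterion that a rational function with only simple zeros and poles on $(-\infty,0)$ maps $\Cp$ into $\overline{\Cp}$ and is nonnegative on $(0,\infty)$ precisely when its zeros and poles interlace. The doubly-infinite case will be handled by finite truncations and locally uniform convergence on compact subsets of $\mathbb{C}\setminus(-\infty,0]$, which is ensured by the absolute convergence in \eqref{eq:funct_gen_dtps}.

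For the sufficiency direction, the prescribed factorization yields
\[
F(z) = \frac{q(z)}{p(z)} = \frac{b_0}{a_0}\,\prod_{\mu > 0} \frac{1 + z/\beta_\mu}{1 + z/\alpha_\mu}\,\prod_{\mu < 0} \frac{1 + z^{-1}/\beta_\mu}{1 + z^{-1}/\alpha_\mu}.
\]
Any truncation of this product is a rational function whose zeros at $-\beta_\mu$ and $-1/\beta_\mu$ and poles at $-\alpha_\nu$ and $-1/\alpha_\nu$ lie on $(-\infty,0)$ and strictly interlace by the hypothesized ordering, so each truncation is an $\mathcal{S}$-function by the classical Stieltjes/Hermite--Biehler criterion; passing to the limit preserves the property.

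For the necessity direction, I write $p$ and $q$ in the form \eqref{eq:funct_gen_dtps}, separating the exponentials, the $z^{j}$-factors, the zero-products $\Pi_p,\Pi_q$, and the pole-products $\Delta_p,\Delta_q$, so that
\[
F(z) = \frac{C_q}{C_p}\,z^{j_q - j_p}\,e^{(A_q - A_p)z + (A'_q - A'_p)/z}\,\frac{\Pi_q(z)\,\Delta_p(z)}{\Pi_p(z)\,\Delta_q(z)}.
\]
The $\mathcal{S}$-property forces three matchings. First, the $\delta$-multisets of $p$ and $q$ coincide, since any unmatched positive pole of $q$ would violate holomorphy of $F$ on $\mathbb{C}\setminus(-\infty,0]$, while any unmatched positive pole of $p$ would appear as a positive-real zero of $q/g$, inconsistent with the target shape. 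Second, $A_p=A_q$ and $A'_p=A'_q$, because $e^{cz}$ has unbounded argument on $\Cp$ for $c\neq 0$ and $e^{c/z}$ does likewise in any $\Cp$-neighborhood of $0$, whereas the Blaschke-type products contribute only bounded argument in those regions. Third, $j_p=j_q$, since $z^{k}$ is an $\mathcal{S}$-function only for $k\in\{0,1\}$ and the residual product is itself too constrained to compensate. Setting
\[
g(z) := C\,z^{j_p}\,e^{A_p z + A'_p/z}\,\frac{\Pi_{\mathrm{com}}(z)}{\Delta_p(z)},
\]
with $\Pi_{\mathrm{com}}$ collecting the zero-factors shared by $\Pi_p$ and $\Pi_q$ and $C>0$ chosen so that $g$ itself lies in the form \eqref{eq:funct_gen_dtps}, one obtains precisely the claimed shapes for $p/g$ and $q/g$, with mutually disjoint $\alpha_\nu$ and $\beta_\mu$.

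The interlacing inequalities then follow by invoking the classical criterion on the common-factor-free ratio $(q/g)/(p/g)$ via the same truncation argument as in the sufficiency direction, noting that the substitution $z\mapsto z^{-1}$ flips the ordering in the $\mu<0$ block and yields exactly the claimed chain of inequalities. I expect the main obstacle to be the second matching above: rigorously ruling out nonzero $A_q - A_p$ or $A'_q - A'_p$ requires a careful growth comparison between the exponential and the Blaschke-type product along suitable rays, together with an analogous analysis near $z=0$; this is where the doubly-infinite structure introduces genuine new difficulty beyond the singly-infinite setting.
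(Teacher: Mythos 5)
Your plan has the same broad shape as the paper's: sufficiency by exhibiting the product with interlacing parameters as a limit of rational Stieltjes functions (this matches Lemma~\ref{lemma:product_is_S}), and necessity by matching the pole sets, the exponential factors, and the power of~$z$, which is exactly the decomposition the paper carries out with Lemmas~\ref{lemma:prop_S_exp} and~\ref{lemma:prop_S}. The sufficiency direction you give is essentially correct.

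The necessity direction, however, has two genuine gaps. First, you support the matching $A_p=A_q$, $A_0'=A_0''$ by claiming that ``the Blaschke-type products contribute only bounded argument.'' That bound is \emph{not} available at this point: without interlacing, the argument of a ratio of canonical products with negative zeros and poles along $z=ir$ is $\sum_\mu\arctan(r/\beta_\mu)-\sum_\nu\arctan(r/\alpha_\nu)$, which can grow without bound (it is $O(r)$ when the $\sum\beta_\mu^{-1},\sum\alpha_\nu^{-1}$ converge), so it could in principle offset a nonzero $Ar-A_0/r$. The argument is therefore circular: bounded argument is a \emph{consequence} of interlacing, which is the conclusion you are proving. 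You flag this as ``the main obstacle,'' but flagging it does not close it; the paper's Lemma~\ref{lemma:prop_S_exp} is formulated precisely for ratios of the form~\eqref{eq:order_alphas_betas}--\eqref{eq:order_alphas_betas_mer}, i.e.\ with interlacing already assumed. Second, your final step — that the interlacing inequalities ``follow by invoking the classical criterion \dots\ via the same truncation argument'' — is wrong in direction. Truncation shows that interlacing \emph{implies} the $\mathcal S$-property; it does not give the converse, since an $\mathcal S$-function may well be a locally uniform limit of rational functions that are not $\mathcal S$ (truncating an $\mathcal S$-function does not preserve the $\mathcal S$-property). The necessity of interlacing requires an independent argument on the infinite product itself; the paper's mechanism (in the proof of Lemma~\ref{lemma:H_TNN_F_has_form}, relying on Fact~\ref{fact:A} plus Rolle's theorem applied to both $F$ and $z/F$, together with Lemma~\ref{lemma:prop_S}) is the ingredient you are missing. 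A smaller issue: your claim $j_p=j_q$ is too strong as stated — the lemma explicitly permits a single zero at the origin via $\beta_{\mu_0}=0$, which corresponds to the form~\eqref{eq:order_alphas_betas_mer} and to the dichotomy $p\in\{0,1\}$ in Lemma~\ref{lemma:prop_S}; your justification (``the residual product is itself too constrained to compensate'') is not an argument.
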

We prove this lemma in the end of Section~\ref{sec:s-funct}; it is an analogue of a theorem due
to Kre\u{\i}n, see~\cite[p.~308]{Levin}. In other words, under the conditions of
Lemma~\ref{lemma:prop_S1} the function~$F(z)$ can be expressed as
in~\eqref{eq:order_alphas_betas} or~\eqref{eq:order_alphas_betas_mer} below. The chain
inequality means that zeros of~$\frac{p(z)}{g(z)}$ and~$\frac{q(z)}{g(z)}$ are
\emph{interlacing}, that is all zeros of each of the functions are real and separated by zeros
of another. Lemma~\ref{lemma:prop_S1} provides an alternative reformulation of the
item~\eqref{item:m1} in our main result:
\begin{theorem}\label{th:main1}
    If~$a_0\ne 0$, then the following conditions are equivalent:
    \begin{enumerate}[\upshape(a)]
    \item \label{item:m1}%
        The series~$p(z)=\sum_{k=-\infty}^\infty\,a_kz^k$
        and~$q(z)=\sum_{k=-\infty}^\infty\,b_kz^k$ converge in some common annulus to functions
        of the form~\eqref{eq:funct_gen_dtps} and their ratio~$F(z)=\frac{q(z)}{p(z)}$ is an
        $\mathcal S$-function.
    \item \label{item:m2}%
        The matrices~$T(Ap+Bq)$ and~$T(Aq+B\widetilde p)$,
        where~$\widetilde p(z)\coloneqq zp(z)$, are totally nonnegative for every choice of the
        numbers~$A,B\ge 0$, and~$T(p)$ has a nonzero minor of order~$2$.
    \item \label{item:m3}%
        The matrix~$H(p,q)$ is totally nonnegative and has a nonzero minor of order~$2$.
    \end{enumerate}
\end{theorem}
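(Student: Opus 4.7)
The plan is to prove the equivalences via the cycle (c) $\Rightarrow$ (b) $\Rightarrow$ (a) $\Rightarrow$ (c), concentrating the main effort on the last implication, which the introduction identifies as delicate.

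For (c) $\Rightarrow$ (b), I start from the identity $T(Ap+Bq)=E_{A,B}\cdot H(p,q)$, where $E_{A,B}$ is the ``selection'' matrix with entry $A$ at position $(i,2i-1)$, entry $B$ at position $(i,2i)$, and zeros elsewhere. Because the nonzero supports of distinct rows of $E_{A,B}$ are disjoint and strictly increasing in the column index, every minor of $E_{A,B}$ expands to a single monomial $A^a B^b\ge 0$ or is zero, so $E_{A,B}$ is totally nonnegative; Cauchy--Binet then transfers total nonnegativity from $H(p,q)$ to $T(Ap+Bq)$. A parallel matrix $E'_{A,B}$ with supports shifted by one handles $T(Aq+B\widetilde p)$. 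The nonzero 2-minor of $T(p)$ is deduced from that on $H(p,q)$ via a brief case analysis: a rank-one $T(p)$ forces $a_n=a_0 r^n$, and the total nonnegativity constraints on adjacent odd-then-even and even-then-odd row-pairs of $H(p,q)$ then force $b_n=b_0 r^n$ as well, which makes $H(p,q)$ itself rank one and contradicts its nonzero 2-minor.

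For (b) $\Rightarrow$ (a), specializing $(A,B)=(1,0)$ and $(0,1)$ combined with the 2-minor hypothesis lets Theorem~\ref{th:E-AESW} produce representation~\eqref{eq:funct_gen_dtps} for $p$ and analogously for $q$. The same theorem applied to $T(Ap+Bq)$ (and to $T(Aq+B\widetilde p)$) for arbitrary $A,B>0$ also produces~\eqref{eq:funct_gen_dtps} for every positive combination; in particular, every zero of $Ap+Bq$ in the annulus of convergence is real and negative. This uniform real-rootedness of positive combinations is the Hermite--Biehler-type condition forcing the zeros of $p$ and $q$ to interlace in the pattern required by Lemma~\ref{lemma:prop_S1}, so $F=q/p$ is an $\mathcal{S}$-function.

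The hard direction is (a) $\Rightarrow$ (c), where the obstruction is precisely that $H(p,q)$ does not admit a direct factorization into totally nonnegative pieces in the doubly infinite setting. My approach is approximation by rational S-functions. Using Lemma~\ref{lemma:prop_S1}, I write $p=g\cdot p_0$ and $q=g\cdot q_0$ with $p_0, q_0$ products of first-degree factors with interlacing negative real zeros, and $g$ a common factor of form~\eqref{eq:funct_gen_dtps}. I approximate $g$ by finite products of elementary factors (replacing $e^{Az}$ by $(1+Az/N)^N$ and using Laguerre-type approximants for the $(1-z/\delta)^{-1}$-type terms), and I truncate $p_0, q_0$ at level $N$ while preserving interlacing. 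Multiplying by a sufficiently high power of $z$ turns the resulting $p^{(N)}, q^{(N)}$ into ordinary polynomials with interlacing negative real zeros, to which the singly infinite Hurwitz-matrix criterion of~\cite{HoltzTyaglov,Dyachenko14} applies, yielding total nonnegativity of $H(p^{(N)}, q^{(N)})$.

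The main obstacle is justifying the limit $N\to\infty$. The absolute convergence of the products in~\eqref{eq:funct_gen_dtps} implies coefficient-wise convergence of the approximants, and hence entrywise convergence of $H(p^{(N)}, q^{(N)})$ to $H(p,q)$; every fixed minor of $H(p,q)$ is then the limit of nonnegative numbers and is itself nonnegative. The finesse lies in designing the polynomial approximants so that the singly infinite criterion applies at every finite $N$---in particular, interlacing must be preserved under truncation, and the Laguerre-type replacements for the transcendental factors in $g$ must themselves fall within the scope of~\eqref{eq:funct_gen_dtps}. Once total nonnegativity of $H(p,q)$ is in hand, the nonzero 2-minor of $H(p,q)$ follows from the nonzero 2-minor of $T(p)$, itself a consequence of $a_0\ne 0$ together with the non-triviality built into~\eqref{eq:funct_gen_dtps}.
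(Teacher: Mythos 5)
Your cycle (c)$\Rightarrow$(b)$\Rightarrow$(a)$\Rightarrow$(c) matches the paper's structure, and the (c)$\Rightarrow$(b) step via the factorization $T(Ap+Bq)=H^{\mathsf T}(A,B)H(p,q)$ and Cauchy--Binet is essentially the paper's Lemma~\ref{lemma:T_via_HH} and Corollary~\ref{cor:total-nonn-interl}. But the other two arrows, as you sketch them, have real gaps. In (b)$\Rightarrow$(a), invoking Hermite--Biehler/Obreschkoff to turn real-rootedness of all positive combinations into interlacing glosses over the hardest part. Even with $a_0\ne0$, real-rootedness of every $Ap+Bq$ is not sufficient: take $p(z)=z+c$ with $c>0$ and $q(z)=1$; every $Ap+Bq$ is real-rooted, yet $F=1/(z+c)$ maps $\Cp$ into $\Cm$ and is not an $\mathcal S$-function. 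The hypothesis on $T(Aq+B\widetilde p)$ therefore carries essential weight, and beyond the interlacing pattern one must separately exclude double zeros of $F$, exponential factors, and extraneous integer powers of $z$ in the ratio. The paper builds this from scratch (Fact~\ref{fact:A} and Lemmas~\ref{lemma:prop_S_exp}, \ref{lemma:prop_S}, \ref{lemma:H_TNN_F_has_form}); a black-box citation for polynomials does not transfer to doubly infinite meromorphic series.

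The more serious gap is in (a)$\Rightarrow$(c). You propose to approximate $g$ itself by polynomials, including ``Laguerre-type approximants'' of the pole factors $(1-z/\delta)^{-1}$, then multiply by a power of $z$ and apply the singly infinite Hurwitz criterion. But no polynomial approximant of $(1-z/\delta)^{-1}$ has only negative real zeros (partial geometric sums have roots of unity zeros, and the same failure persists for Pad\'e or similar choices); consequently $g^{(N)}$ will not generate a P\'olya frequency sequence and $p^{(N)}=g^{(N)}p_0^{(N)}$ falls outside the scope of the singly infinite criterion. The paper avoids the problem rather than solving it: it factors $H(p,q)=H(p_*,q_*)\cdot T(g)$ with $p_*=p/g$ and $q_*=q/g$ free of poles and exponential factors, approximates \emph{only} $p_*,q_*$ by finite Laurent products $p_n,q_n$ whose zeros are negative real and interlacing (Lemma~\ref{lemma:converse}), and handles $T(g)$ directly from Theorem~\ref{th:E-AESW} --- no approximation of $g$ is ever attempted. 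That factorization is what makes the limit argument work, and your scheme needs it; as written, the pole part of $g$ breaks the singly infinite reduction at every finite $N$.
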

\begin{remark}\label{rem:degenerating_series}
    Let~$a_0\ne 0$. Then the totally nonnegative matrix~$H(p,q)$ has only zero minors of
    order~$2$ if and only if~$a_k= a_0^{1-k}a_1^{k}\ne 0$ and~$b_k=\frac{b_0}{a_0}a_k$ for
    all~$k$, as is stated in Corollary~\ref{cor:total-nonn-interl}. This case is excluded from
    Theorem~\ref{th:main1} as corresponding to the divergence of the power series~$p(z)$, see
    Theorem~\ref{th:E-AESW}.
\end{remark}
Both earlier works~\cite{HoltzTyaglov,Dyachenko14} exploit a relation to the matching moment
problem trough the Hurwitz transform (see e.g.~\cite[p.~44]{ChebMei}
or~\cite[p.~427]{HoltzTyaglov}). In turn, doubly infinite series do not allow conducting the
same procedure due to the lack of the matching moment problem. Accordingly, the corresponding
Hurwitz-type matrices have no induced factorizations. To get around this difficulty, we first
obtain the implication~\eqref{item:m3}$\implies$\eqref{item:m2} of Theorem~\ref{th:main1}. Then
Theorem~\ref{th:E-AESW} allows us to reduce the problem to studying ratios of functions of the
form~\eqref{eq:funct_gen_dtps}. The inclusion of Item~\eqref{item:m2} in Theorem~\ref{th:main1}
yields a generalization of a fact known for polynomials, see e.g.~\cite[Lemma~3.4]{Wa}. In the
related publication~\cite{Dyachenko16}, we aim at deriving properties directly from estimates of
minors of Hurwitz-type matrices.

% An interesting outcome of Theorem~\ref{th:main1} is related to the Hurwitz criterion and
% represents a far-reaching generalization of~\cite{Asner,Kemperman}; it is an extension of the
% recent result~\cite[Theorem~1.1]{Dyachenko14} to doubly infinite series.
By definition, a polynomial is \emph{quasi-stable} if it has no zeros in the right half of the
complex plane. It is known~\cite{Asner,Kemperman}, that Hurwitz matrices of quasi-stable
polynomials with positive leading coefficients are totally nonnegative. The relevant criterion
of total nonnegativity of Hurwitz matrices follows from Theorem~\ref{th:main1}; it is an
extension of the recent result~\cite[Theorem~1.1]{Dyachenko14} to doubly infinite series:
\begin{theorem}\label{th:main2}
    A non-trivial two-way series~$f(z)=\sum_{k=-\infty}^{\infty}f_kz^k=q(z^2)+zp(z^2)$ converges
    to a function of the form
    \begin{equation}\label{eq:f_weierstrass}
        g(z^2)\cdot z^r e^{Bz+\frac{B_0}z}
        \prod_{\lambda\ne0} \left(1+\frac{z^{\sign\lambda}}{\xi_\lambda}\right)
        \cdot
        \prod_{\nu\ne0} \left(1+\frac{z^{\sign\nu}}{\gamma_\nu}\right)
        \left(1+\frac{z^{\sign\nu}}{\overline\gamma_\nu}\right),
    \end{equation}
    where~$\xi_\lambda,\Im\gamma_\nu,\Re\gamma_\nu>0$ and~$B, B_0\ge0$ for
    all~$\lambda,\nu\ne0$, the function~$g(z)$ can be represented as
    in~\eqref{eq:funct_gen_dtps} and~$r$ is an integer, if and only if the corresponding Hurwitz
    matrix~$H(p,q)$ is totally nonnegative and has a nonzero minor of order at least two.%
    \footnote{When all minors of the matrix~$H(p,q)$ of order two turn to zero, the
        series~$f(z)$ must be trivial (\emph{i.e.} all its coefficients are zero) or divergent,
        see Remark~\ref{rem:degenerating_series}.}
\end{theorem}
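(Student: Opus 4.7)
The plan is to reduce Theorem~\ref{th:main2} to Theorem~\ref{th:main1} together with Lemma~\ref{lemma:prop_S1}. Those two statements recast the hypothesis ``$H(p,q)$ is totally nonnegative and has a nonzero minor of order at least two'' as: there exists a function $g$ of the form \eqref{eq:funct_gen_dtps} with $p = g\hat p$ and $q = g\hat q$, where $\hat p, \hat q$ are products of linear factors indexed by strictly interlacing positive numbers $\alpha_\nu,\beta_\mu$ as in Lemma~\ref{lemma:prop_S1}. Substituting into $f(z) = q(z^2) + z p(z^2)$ yields
\[
    f(z) = g(z^2) \bigl[\hat q(z^2) + z \hat p(z^2)\bigr],
\]
so the $g(z^2)$-factor in \eqref{eq:f_weierstrass} is matched directly, and the task collapses to showing the following equivalence: $\phi(z) := \hat q(z^2) + z\hat p(z^2)$ admits the representation $z^r e^{Bz+B_0/z}\prod_\lambda(1 + z^{\sign\lambda}/\xi_\lambda)\prod_\nu (1 + z^{\sign\nu}/\gamma_\nu)(1 + z^{\sign\nu}/\overline\gamma_\nu)$ (with $\xi_\lambda,\Re\gamma_\nu,\Im\gamma_\nu > 0$, $B,B_0 \ge 0$, $r \in \mathbb Z$) if and only if $\hat p,\hat q$ satisfy the interlacing condition of Lemma~\ref{lemma:prop_S1}. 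This is a Hermite--Biehler-type equivalence in the doubly infinite setting.

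For the implication from interlacing to the product form, I would first show that $\phi$ has no zeros in the closed right half-plane. Writing $\phi(z) = \hat p(z^2)[F(z^2) + z]$ with $F = \hat q/\hat p$ the associated $\mathcal S$-function: for $z$ in the open first quadrant, $z^2 \in \Cp$, so $\Im F(z^2) \ge 0$, and combined with $\Im z > 0$ this forces $F(z^2) + z \ne 0$; the open fourth quadrant follows by conjugate symmetry of $F$; on the positive real axis $F(z^2) \ge 0$ and $z > 0$, so $F(z^2) + z > 0$; and on the positive imaginary axis $z = iy$, the candidate zeros of $\hat q(z^2)$ and $\hat p(z^2)$ sit at $y = \sqrt{\beta_\mu}$ and $y = \sqrt{\alpha_\nu}$, which cannot coincide because of strict interlacing. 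Hence all zeros of $\phi$ lie in the open left half-plane and pair into complex conjugates. Assembling real negative zeros into factors $(1+z^{\sign\lambda}/\xi_\lambda)$, conjugate pairs into $(1+z^{\sign\nu}/\gamma_\nu)(1+z^{\sign\nu}/\overline\gamma_\nu)$, and prepending $z^r e^{Bz+B_0/z}$ to absorb the two-way asymptotics at $0$ and $\infty$, the Hadamard--Weierstrass canonical product of $\phi$ takes the required shape.

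For the converse, starting from $\phi$ of the displayed form, I would analyse $\phi(iy) = \hat q(-y^2) + iy\hat p(-y^2)$ along the imaginary axis. Since $\phi$ has no zeros there away from $0$, the real functions $\hat q(-y^2)$ and $y\hat p(-y^2)$ cannot vanish simultaneously; on the other hand, the identity $\hat q(z)^2 - z\hat p(z)^2 = \Phi(z)$ with $\Phi(z^2) := \phi(z)\phi(-z)$ pins down the zeros of $\hat q^2 - z\hat p^2$ as the positive reals $\xi_\lambda^2$ together with the conjugate pairs $\gamma_\nu^2,\overline\gamma_\nu^2$. That $\hat q,\hat p$ come from the decomposition of a function whose zeros lie in the closed left half-plane forces their own zeros to be real and positive; a Sturm-type argument on the alternation of signs of $\hat q(-y^2)$ and $\hat p(-y^2)$ as $y$ increases then yields strict interlacing. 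Lemma~\ref{lemma:prop_S1} certifies that $q/p = \hat q/\hat p$ is an $\mathcal S$-function, and Theorem~\ref{th:main1} then gives the total nonnegativity of $H(p,q)$.

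The main obstacle will be the analytic bookkeeping for the infinite products and the essential singularities at $0$ and $\infty$. In the polynomial case the equivalence above is a classical consequence of the Hermite--Biehler theorem, but here one must (i) ensure that the canonical Weierstrass products remain convergent after the rearrangement $\hat q(z^2) + z\hat p(z^2) \leftrightarrow \phi(z)$; (ii) reconcile the exponential factors $e^{Bz+B_0/z}$ in \eqref{eq:f_weierstrass} with the exponential-free structure of $\hat p,\hat q$ supplied by Lemma~\ref{lemma:prop_S1} (they arise entirely from the asymptotics of the reassembled sum); and (iii) accommodate the degenerate cases $\beta_{\mu_0}=0$ in Lemma~\ref{lemma:prop_S1} and a nontrivial integer $r$ in \eqref{eq:f_weierstrass}. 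Conceptually, however, the whole argument rests on the Hermite--Biehler correspondence between interlacing positive zeros and zeros confined to the closed left half-plane, transposed to the doubly infinite situation.
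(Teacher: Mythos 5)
Your reduction is set up correctly: both you and the paper peel off the factor $g(z^2)$ supplied by Lemma~\ref{lemma:prop_S1}/Theorem~\ref{th:main1} and reduce to analysing $h(z):=f(z)/g(z^2)=\hat q(z^2)+z\hat p(z^2)$. Your quadrant-by-quadrant argument that $h$ has no zeros in the closed right half-plane is essentially correct. However, two of the three ``obstacles'' you acknowledge at the end are precisely where the paper does its real work, and your outline does not resolve them; they cannot be dismissed as bookkeeping.

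First, for the forward direction you need the Weierstrass product of $h$ to converge, i.e.\ you need $\sum_{\nu>0}|\Re\gamma_\nu^{-1}|+\sum_{\nu<0}|\Re\gamma_\nu|+\sum_{\lambda}|\Re\xi_\lambda^{\pm1}|<\infty$. In the polynomial/finite case this is vacuous, but for a doubly infinite series with essential singularities at $0$ and $\infty$ it is a genuine analytic statement. The paper extracts it from the Carleman formula (Lemma~\ref{lemma:h_is_A}), applied to $f(iz)=h(-z)/h(z)$ and $f(iz^{-1})=h(-z)/h(z)$, which in turn uses the inequality $|h(z)|>|h(-z)|$ on $\Re z>0$ proved in Lemma~\ref{lemma:h_lhp_less_h_rhp}. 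Your argument establishes only that $h$ has no zeros in the closed right half-plane, which is strictly weaker than this modulus inequality and is not enough to run Carleman's formula. Second, once the product representation $h(z)=z^re^{Bz+B_0/z}\prod(\cdots)$ is in hand you still have to prove $B,B_0\ge 0$; you simply say the exponential is there to ``absorb the two-way asymptotics,'' but nothing in your outline forces the signs. The paper proves this by combining the subexponential estimate of Theorem~\ref{th:canonical_product} with the bound $|h(-x)/h(x)|<1$ on $(0,\infty)$, which would be violated at $x\to\infty$ or $x\to0^+$ if $B<0$ or $B_0<0$.

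For the converse you diverge substantially from the paper. The paper does not prove interlacing of $\hat p,\hat q$; instead it truncates $h$ to stable polynomials $h_n$, invokes Kemperman's theorem to get total nonnegativity of $H(P_n,Q_n)$, passes to the entry-wise limit, and concludes by Cauchy--Binet with $T(g)$. Your proposed route, via a Sturm-type alternation argument plus the identity $\hat q(w)^2-w\hat p(w)^2=\phi(\sqrt w)\phi(-\sqrt w)$ to establish interlacing and then invoke Theorem~\ref{th:main1}, is a legitimately different strategy; but the central claim -- that zeros of $\hat q$ and $\hat p$ must be real and positive because $h$ has no zeros in the right half-plane -- is exactly a Hermite--Biehler statement whose doubly infinite version is not available off the shelf and would require its own proof (handling the essential singularities, the factor $z^r$, and the real exponential). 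The paper's Kemperman-plus-limit argument sidesteps this entirely and is the more robust route in this setting. In short: the skeleton is right, the forward direction is missing the Carleman-type convergence input and the sign argument for $B,B_0$, and the converse replaces a safe approximation argument with an unproved infinite-dimensional Hermite--Biehler step.
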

Note that the expression~\eqref{eq:f_weierstrass} can be rewritten
as~$q_1(z)\cdot q_2\left(\frac 1z\right)$, where both~$q_1(z)$ and~$q_2(z)$ can be represented
in the form
\[
    Cz^re^{Az^2+Bz}\cdot
    \prod_{\lambda>0} \left(1+\frac{z}{\xi_\lambda}\right)
    \cdot
    \prod_{\nu>0} \left(1+\frac{z}{\gamma_\nu}\right)
    \left(1+\frac{z}{\overline\gamma_\nu}\right)
    \cdot
    \prod_{\mu>0} \frac{1}{1-\frac{z^2}{\delta_\mu^2}}
\]
\setstretch{1.15}%
with the same conditions on the coefficients, except that~$\Re\gamma_\nu\ge 0$ and
additionally~$A\ge 0$ and~$C, c_\mu>0$ for all~$\mu>0$. Theorem~1.1 of~\cite{Dyachenko14} says
that the Hurwitz matrices generated by the involved functions~$q_1(z)$ and~$q_2(z)$ are totally
nonnegative; the annulus of convergence of~$f(z)$ is the domain, where both power series
for~$q_1(z)$ and~$q_2\left(\frac 1z\right)$ converge. In other words, the relations between
limits of singly and doubly infinite series corresponding to totally nonnegative Hurwitz and
Toeplitz matrices are akin.

Another outcome of Theorem~\ref{th:main1} is an extension
of~\cite[Theorem~4]{HoltzKushelKhrushchev16} on total nonnegativity of the generalized Hurwitz
matrices, \emph{i.e.} the matrices defined by
\[
\big(f_{jM-i+1}\big)_{i,j=-\infty}^{\infty}
=
\begin{pmatrix}
    \ddots& \vdots&\vdots& \vdots & \vdots & \iddots\\
    \hdots & f_M & f_{2M} & f_{3M} & f_{4M} & \hdots\\
    \hdots & f_{M-1} & f_{2M-1} & f_{3M-1} & f_{4M-1} & \hdots\\
    \vdots & \vdots& \vdots & \vdots  & \vdots & \vdots\\
    \hdots &f_0 & f_{M} & f_{2M} & f_{3M} & \hdots\\
    \hdots &f_{-1} & f_{M-1} & f_{2M-1} & f_{3M-1} & \hdots\\
    \iddots &\vdots& \vdots & \vdots & \vdots & \ddots
\end{pmatrix}
,
\]
where~$M=1,2,\dots$ and~$f_k$ is the~$k$th coefficient of a power series. (The extension seems
to be new even for the singly infinite case.) Let~$\arg z$ denote the principal value of the
argument of a complex number~$z \ne0$. Given a doubly infinite
series~$f(z)=\sum_{k=-\infty}^{\infty}f_kz^k=\sum_{n=0}^{M-1}z^np_n(z^M)$ assume that the
corresponding generalized Hurwitz matrix is totally nonnegative. Then all the Hurwitz-type
matrices~$H(p_m,p_n)$, ~$n<m$, are totally nonnegative as submatrices
of~$(f_{jM-i})_{i,j=-\infty}^{\infty}$; this weaker property ensures that the function
represented by~$f(z)$ does not vanish in a certain sector of the complex plane:
\begin{theorem}\label{th:main3}
    If all Hurwitz-type matrices~$H(p_m,p_n)$ are totally nonnegative for $0\le n<m<M$ and at
    least one of them contains a nonzero minor of order two, then the
    series~$f(z)=\sum_{n=0}^{M-1}z^np_n(z^M)$ converges to a
    function~$g(z^M)\cdot q_1(z)\cdot q_2\left(\frac 1z\right)$ with no zeros in the
    sector~$C_M\coloneqq\big\{z\in\mathbb C:|\arg z|<\frac{\pi}{M}\big\}$, where both~$q_1(z)$
    and~$q_2(z)$ are entire functions of genus at most~$M-1$ and~$g(z)$ has the
    form~\eqref{eq:funct_gen_dtps}.
\end{theorem}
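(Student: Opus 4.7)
I would prove Theorem~\ref{th:main3} in three stages: first promote the hypothesis on one $H(p_m,p_n)$ to a Weierstrass representation of every $p_k$; then extract the factorization $f(z)/g(z^M)=q_1(z)\cdot q_2(1/z)$; finally rule out zeros of each factor in the sector $C_M$, the last step relying on the singly-infinite analogue \cite[Theorem~4]{HoltzKushelKhrushchev16} as a black box.

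For the first stage, I start with the distinguished pair $(m_0,n_0)$ for which $H(p_{m_0},p_{n_0})$ has a nonzero minor of order two. The implication \eqref{item:m3}$\Rightarrow$\eqref{item:m1} of Theorem~\ref{th:main1} supplies both $p_{m_0}$ and $p_{n_0}$ in the form \eqref{eq:funct_gen_dtps} on a common annulus, together with the $\mathcal{S}$-function property of $p_{n_0}/p_{m_0}$. For any third index $k$, I pair $p_k$ with $p_{m_0}$ or $p_{n_0}$ (in the order dictated by the sign of $k-m_0$ etc.): the matrix $H(p_\ell,p_k)$ is again totally nonnegative, and since its companion row has form \eqref{eq:funct_gen_dtps} with a rich supply of nonzero $2\times2$ minors, a second application of Theorem~\ref{th:main1} gives $p_k$ in the form \eqref{eq:funct_gen_dtps} as well and every ratio $p_n/p_m$, $n<m$, as an $\mathcal{S}$-function. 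The boundary case of Remark~\ref{rem:degenerating_series} is treated directly (the corresponding $p_k$ is then a scalar multiple of $p_{m_0}$ or proportional to a geometric series).

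For the second stage I use Lemma~\ref{lemma:prop_S1}: the pairwise interlacing of zeros of the $p_k$ forces a common Weierstrass divisor $g$ of form \eqref{eq:funct_gen_dtps}. Writing $\widetilde p_k:=p_k/g$, each quotient splits as $\widetilde p_k(w)=P_k(w)\,Q_k(1/w)$, where $P_k$ absorbs the positive-index factors together with $e^{Az}$ and the integer shift $z^j$, and $Q_k$ absorbs the negative-index factors together with $e^{A_0/z}$; both $P_k$ and $Q_k$ are entire of genus at most one. Substituting $w=z^M$ and regrouping,
\[
    \frac{f(z)}{g(z^M)}
    =\sum_{n=0}^{M-1}z^n\,P_n(z^M)\,Q_n(z^{-M})
    =q_1(z)\cdot q_2(1/z),
\]
with $q_1$ and $q_2$ entire. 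The inflation of the genus from one in the variable $z^M$ to at most $M-1$ in the variable $z$ is automatic from the Weierstrass bookkeeping.

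The third stage treats the three factors separately. Since $z\in C_M$ is equivalent to $z^M\notin(-\infty,0]$ and the zeros of $g$ lie on $(-\infty,0]$, the factor $g(z^M)$ is zero-free in $C_M$. The Hurwitz-type matrices built from the coefficients of $P_0,\dots,P_{M-1}$ are one-sided principal submatrices of those of $p_0,\dots,p_{M-1}$, hence totally nonnegative and inheriting a nonzero $2\times2$ minor; the singly-infinite Theorem~4 of~\cite{HoltzKushelKhrushchev16} then asserts that $q_1$ is zero-free in $C_M$. The same argument applied to $Q_0,\dots,Q_{M-1}$ shows $q_2$ zero-free in $C_M$, and because $z\in C_M$ iff $1/z\in C_M$, the composition $q_2(1/z)$ inherits this. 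The main obstacle will be stage two: verifying that the heuristic "positive/negative index" split of the Weierstrass factors is consistent across all $k$, so that after the weighted summation the result truly separates as $q_1(z)\,q_2(1/z)$ rather than a more general double series. A secondary difficulty in stage one is propagating the "nonzero minor of order two" condition from the anchor pair to an arbitrary $p_k$, especially in the degenerate cases covered by Remark~\ref{rem:degenerating_series}.
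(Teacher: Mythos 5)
The proposal diverges seriously from the paper in its second and third stages, and both departures contain gaps.

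\textbf{Stage 2.} Writing $\widetilde p_k(w)=P_k(w)\,Q_k(1/w)$ is fine — that is exactly the form supplied by Lemma~\ref{lemma:prop_S1} once $g$ has absorbed the poles and exponential factors. But the next step, claiming
\[
\sum_{n=0}^{M-1}z^n\,P_n(z^M)\,Q_n(z^{-M})=q_1(z)\cdot q_2\!\left(\tfrac 1z\right),
\]
does not follow from ``regrouping.'' A sum of products of this shape has no algebraic reason to factor into a $z$-entire piece times a $1/z$-entire piece; each summand factors, but the sum need not. The paper does not attempt such manipulation: it observes that $f(z)/g(z^M)$ is analytic in $\mathbb{C}\setminus\{0\}$ (all poles of the $p_k$ are cancelled by $g$) and satisfies the subexponential growth bounds of Theorem~\ref{th:canonical_product} near $0$ and $\infty$; the factorization and the genus bound then come from Hadamard--Weierstrass theory applied to this analytic function, not from recombining the individual $P_k,Q_k$.

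\textbf{Stage 3.} This is where the proposal is furthest from a proof. The assertion that the Hurwitz-type matrices of $P_0,\dots,P_{M-1}$ are ``one-sided principal submatrices'' of $H(p_m,p_n)$ is false: $P_k$ is obtained from $p_k$ by dividing out $g$ (a convolution) and then taking the positive-index Weierstrass half, so its Laurent coefficients are not a subsequence of those of $p_k$. Consequently the total nonnegativity of $H(P_m,P_n)$ does not carry over for free. A deeper problem is that even if those matrices were totally nonnegative, the factor $q_1$ coming from the Hadamard decomposition is \emph{not} equal to $\sum_n z^n P_n(z^M)$, so a singly-infinite sector result applied to the $P_n$ would say nothing about $q_1$. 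Finally, the cited Theorem~4 of~\cite{HoltzKushelKhrushchev16} requires total nonnegativity of the full generalized Hurwitz matrix, which is a stronger hypothesis than the pairwise condition used here; the paper explicitly points out that it is proving the conclusion under the weaker pairwise assumption.

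The paper's own zero-free argument bypasses the factorization entirely. Using that each ratio $p_n(z^M)/p_m(z^M)$ is an $\mathcal{S}$-function in $z^M$, one gets the two-sided argument bound~\eqref{eq:gen_hurw_ineq} for $z$ with $0\le\arg z<\pi/M$. Assuming $f(z)=0$ rewrites to $\sum_{n\ge1} z^n p_n(z^M)/p_0(z^M)=-1$; a convexity argument on the cone spanned by the summand arguments (anchored by~\eqref{eq:gen_hurw_ineq}) makes a value of $-1$ impossible. This is more elementary and avoids the need to propagate total nonnegativity through the Weierstrass split.

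If you keep the factorization in Stage~2, replace ``regrouping'' by the analyticity-plus-growth argument, and replace Stage~3 wholesale by an argument-counting contradiction — otherwise you are relying on statements that are not available and on an incorrect submatrix identification.
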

Already for polynomials, the absence of zeros in the sector~$C_M$ for~$M>2$ is necessary but not
sufficient for the total nonnegativity of the corresponding generalized Hurwitz matrix:
see~\cite[Example~35]{HoltzKushelKhrushchev16}. The question of sufficient conditions is opened.
Following~\cite{HoltzKushelKhrushchev16}, we only remark here that total nonnegativity of the
matrix~$(f_{jM-i})_{i,j=-\infty}^{\infty}$ implies total nonnegativity of its
submatrix~$(f_{jkM-i})_{i,j=-\infty}^{\infty}$ for each~$k=1,2,\dots$.

In the products and sums with inequalities in limits, we assume that the indexing variable
changes in~$\mathbb{Z}$ or in some finite or infinite subinterval of~$\mathbb{Z}$, and that it
additionally satisfies the indicated inequalities. Accordingly, a product or sum can be empty,
finite or infinite. By writing that a function has one of the above representations, we assume
that the involved products are locally uniformly convergent unless the converse is stated
explicitly. In the above theorems, the convergence follows from the total nonnegativity of the
involved matrices. The condition of convergence is well-known and can be expressed as the
following theorem, which we apply in the settings~$\zeta=z^{\pm 1}$ or~$\zeta=z^2$.
\begin{theorem}[see \emph{e.g.}~{\cite[pp.~7--13,~21]{Levin}}]\label{th:canonical_product}
    The infinite product~$\prod_{\nu=0}^\infty \left(1+\frac{\zeta}{\alpha_\nu}\right)$,
    converges uniformly in~$\zeta$ varying in compact subsets of~$\mathbb{C}$ if and only if the
    series~$\sum_{\nu=0}^\infty \frac 1{|\alpha_\nu|}$ converges. If so, then for
    any~$\varepsilon>0$ the estimates
    \( \prod_{\nu=0}^\infty \left|1+\frac{\zeta}{\alpha_\nu}\right| < C e^{\varepsilon R} \)
    and, outside exceptional disks with an arbitrarily small sum of radii,
    \( \prod_{\nu=0}^\infty \left|1+\frac{\zeta}{\alpha_\nu}\right| > C e^{-\varepsilon R} \)
    provided that~$|\zeta|\le R$ and the positive numbers~$R$ and~$C$ are big enough.
\end{theorem}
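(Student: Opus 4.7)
The plan is to treat the three assertions separately: the convergence criterion, the upper bound, and the lower bound off the exceptional set. For the first, I would invoke the standard infinite-product criterion together with the Taylor expansion $\log(1+a)=a+O(|a|^2)$: since $|\alpha_\nu|\to\infty$, only finitely many terms fail $|\zeta/\alpha_\nu|\le 1/2$ on $|\zeta|\le R$, so uniform convergence of $\sum\log(1+\zeta/\alpha_\nu)$ on the disk reduces to absolute convergence of $\sum\zeta/\alpha_\nu$, that is, to $\sum 1/|\alpha_\nu|<\infty$. The converse is immediate because uniform convergence of the infinite product implies convergence of $\prod(1+R/|\alpha_\nu|)$, which by the same expansion is equivalent to $\sum 1/|\alpha_\nu|<\infty$.

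For the upper bound I would fix $\varepsilon>0$, pick a threshold $A$ so that $\sum_{|\alpha_\nu|>A}1/|\alpha_\nu|<\varepsilon$, and use $|1+\zeta/\alpha_\nu|\le e^{R/|\alpha_\nu|}$ factor by factor. The tail then contributes at most $e^{\varepsilon R}$, while the finite initial segment contains $O(1)$ factors each bounded by $1+R/|\alpha_\nu|$ and hence grows only polynomially in $R$; this polynomial factor is swallowed by $e^{\varepsilon R}$ once $R$ is large enough and can be absorbed into the constant $C$.

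The lower bound is the genuinely delicate point, and I expect it to be the main obstacle because $|1+\zeta/\alpha_\nu|$ can be arbitrarily small whenever $\zeta$ is near the zero $-\alpha_\nu$. I would again split at $|\alpha_\nu|=2R$: on the tail the elementary inequality $\log(1-x)\ge-2x$ for $0\le x\le 1/2$ gives $\log|1+\zeta/\alpha_\nu|\ge-2R/|\alpha_\nu|$, and the convergence of $\sum 1/|\alpha_\nu|$ makes the resulting contribution at most $\varepsilon R$ for $R$ large. For the finite head I would excise, around each zero $-\alpha_\nu$ with $|\alpha_\nu|\le 2R$, an open disk of radius $r_\nu$ chosen according to a Boutroux--Cartan-type prescription with $\sum r_\nu$ arbitrarily small; outside the union of these disks one has $|1+\zeta/\alpha_\nu|\ge r_\nu/|\alpha_\nu|$. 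Summing $\log(r_\nu/|\alpha_\nu|)$ over the head and invoking the density bound $n(r):=\#\{\nu:|\alpha_\nu|\le r\}=o(r)$---which itself follows from $\sum 1/|\alpha_\nu|<\infty$ via Abel summation---shows that the head contribution is $o(R)$ and can be absorbed into $-\varepsilon R+\log C$.

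The essential subtlety is coordinating the radii $r_\nu$ so that $\sum r_\nu$ stays below the prescribed threshold while $\sum\log(r_\nu/|\alpha_\nu|)$ remains $o(R)$; this is precisely where the Cartan-type combinatorial lemma recorded in Levin's exposition does the work, and once it is available the three estimates fit together to yield the stated bounds for all sufficiently large $R$ and $C$.
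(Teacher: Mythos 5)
The paper does not prove this theorem; it is quoted from Levin's book and used as a black box, so there is no internal argument to compare against. Evaluated on its own merits, your outline has the right overall shape but two of its steps do not hold up. Your ``only if'' half of the convergence criterion is invalid as written: uniform convergence of the partial products yields only conditional convergence of $\sum 1/\alpha_\nu$, not convergence of $\sum 1/|\alpha_\nu|$, and for arbitrary complex $\alpha_\nu$ the asserted equivalence is simply false (take $\alpha_\nu=(-1)^\nu\nu$: the product converges locally uniformly while $\sum 1/|\alpha_\nu|=\infty$). The paper invokes the theorem for positive real $\alpha_\nu$ and for conjugate pairs; your sketch is using that sign/pairing structure without saying so.

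The more serious gap is in the lower bound, where you conflate naive per-zero excision with the Boutroux--Cartan lemma, and the naive scheme does not close. Let $n=n(2R)$ be the number of zeros with $|\alpha_\nu|\le 2R$. If you excise a disk of radius $r_\nu$ about each $-\alpha_\nu$ with $\sum r_\nu=2H$ and estimate $|1+\zeta/\alpha_\nu|\ge r_\nu/|\alpha_\nu|$ factor by factor, then AM--GM forces $\sum\log r_\nu\le n\log(2H/n)$, so the head contribution is bounded below only by $n\log(2H/n)-n\log(2R)=-n\log(n/\delta)$ once $H=\delta R$. The only density information $\sum 1/|\alpha_\nu|<\infty$ supplies is $n(r)=o(r)$, so $n\log n$ need not be $o(R)$, and the estimate fails. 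Cartan's lemma is a genuine improvement, not mere packaging: outside disks of total radii $\le 2H$ it gives $\prod_{|\alpha_\nu|\le 2R}|\zeta+\alpha_\nu|>(H/e)^n$, removing the $(2H/n)^n$ loss, so that with $H=\delta R$ the head contributes $\ge n\log(\delta/(2e))=-O(n)=-o(R)$ as required. Note also that ``arbitrarily small sum of radii'' must be read relative to $R$ (i.e.\ $\le 2\delta R$); an absolute bound on $\sum r_\nu$ is out of reach under $n(r)=o(r)$ alone, and your final sentence leaves this ambiguous.
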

\setstretch{1.2}%
For example, by this theorem the convergence (locally uniform in some annulus centred at the
origin) of a series to a function of the form~\eqref{eq:f_weierstrass} implies the condition
\[
\sum_{\lambda\ne0}\frac 1{\xi_{\lambda}}+ \sum_{\nu\ne0}\frac1{|\Re\gamma_{\nu}|}
+ \sum_{\nu\ne0}\frac1{|\gamma_{\nu}|^{2}}<\infty.
\]

\begin{remark}
    The matrix~$H(p',p)$ is totally nonnegative if and only if~$p(z)$ represents an
    \emph{entire} function generating a totally positive sequence
    (see~\cite[Theorem~1.2]{Dyachenko14}). If so, then~$p'(z)$ also generates a totally positive
    sequence. Considering doubly infinite series does not change the picture: the Toeplitz
    matrix~$T(p')$ contains negative entries provided that the series~$p(z)$ has a positive
    coefficient at a negative power of~$z$. It can be shown that~$\frac{p(z)}{p'(z)}$ is not a
    mapping of~\Cp\ into itself (\emph{cf.} Theorem~\ref{th:main1}) provided that the
    function~$p(z)$ is not entire and generates a totally positive sequence. However, if~$p(z)$
    is meromorphic in~$\mathbb{C}\setminus\{0\}$, then the ratio~$\frac{zp'(z)}{p(z)}$ is a
    mapping of the upper half of the complex plane into itself exactly when~$p(z)$ generates a
    totally positive sequence. The further details can be found
    in~\cite[Chapter~5]{DyachenkoThesis} and~\cite{TyaglovDyachenko}.
\end{remark}

\section{\texorpdfstring{$\mathcal{S}$}{S}-functions}\label{sec:s-funct}
\begin{lemma}\label{lemma:product_is_S}
    The product
    \begin{equation}\label{eq:order_alphas_betas}
        \begin{gathered}
            C \frac{\prod_{\mu>0} \left(1+\dfrac{z}{\beta_\mu}\right)}
            {\prod_{\nu>0} \left(1+\dfrac{z}{\alpha_\nu}\right)}
            \frac{\prod_{\mu<0}\left(1+\dfrac{z^{-1}}{\beta_\mu}\right)}
            {\prod_{\nu<0}\left(1+\dfrac{z^{-1}}{\alpha_\nu}\right)},
            \quad\text{where $C>0$, and the numbers}
            \\
            0<\cdots<\alpha_{-2}^{-1}<\beta_{-1}^{-1}<\alpha_{-1}^{-1}
            <\beta_{1}<\alpha_{1}<\beta_{2}<\alpha_2<\cdots
        \end{gathered}
    \end{equation}
    satisfy~$\sum_{\nu\ne 0} \big(\alpha_\nu^{-1} + \beta_\nu^{-1}\big) <\infty$, determines
    an~$\mathcal S$-function. Analogously,
    \begin{equation}\label{eq:order_alphas_betas_mer}
        C
        \frac{z+\beta_0}{z+\alpha_0}\cdot
        \frac{\prod_{\mu>0} \left(1+\dfrac{z}{\beta_\mu}\right)}
        {\prod_{\nu>0} \left(1+\dfrac{z}{\alpha_\nu}\right)}
        ,
    \end{equation}
    %\setstretch{1.2}%
    where~$C\ge 0$ and the numbers~$0\le\beta_{0}<\alpha_{0}<\beta_{1}<\alpha_{1}<\cdots$
    satisfy~$\sum_{\nu>0} \big(\alpha_\nu^{-1} + \beta_\nu^{-1}\big) <\infty$ is a
    meromorphic $\mathcal S$\nobreakdash-function. Products over~$\mu$ and~$\nu$
    in~\eqref{eq:order_alphas_betas} or~\eqref{eq:order_alphas_betas_mer} can be terminating, in
    which case the numerator and the denominator retain to have interlacing zeros.
    % Each entire $\mathcal{S}$-function has the form~$C(z+\beta_0)$.
\end{lemma}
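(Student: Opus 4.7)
The plan is to reduce the lemma to a finite rational setting in which the $\mathcal{S}$-property can be verified by partial fractions, and then to pass to the infinite product by a locally uniform limit argument based on Theorem~\ref{th:canonical_product}.

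I would begin with the meromorphic statement~\eqref{eq:order_alphas_betas_mer}. Truncating the product to the first $N$ factors and clearing the constants $1/\beta_k$ and $1/\alpha_k$ yields a rational function $R_N(z)=K_0\prod_{k=0}^{N}(z+\beta_k)/\prod_{k=0}^{N}(z+\alpha_k)$ with $K_0>0$ and with all zeros $-\beta_k$ and poles $-\alpha_k$ strictly interlacing on the non-positive real axis. Its partial fraction expansion reads
\[
R_N(z)=K+\sum_{k=0}^{N}\frac{c_k}{z+\alpha_k},\qquad c_k=K_0\cdot\frac{\prod_{j}(\beta_j-\alpha_k)}{\prod_{j\ne k}(\alpha_j-\alpha_k)},
\]
where $K=\lim_{z\to\infty}R_N(z)>0$. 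The strict interlacing produces exactly $k+1$ negative factors in the numerator against~$k$ in the denominator, so $c_k<0$; writing $c_k=-d_k$ with $d_k>0$ gives
\[
\Im R_N(z)=\Im z\cdot\sum_{k=0}^{N}\frac{d_k}{|z+\alpha_k|^{2}},
\]
which has the same sign as $\Im z$, while $R_N(z)>0$ on $(0,\infty)$ because all of the factors are. Hence $R_N$ is an $\mathcal{S}$-function. The hypothesis $\sum(\alpha_k^{-1}+\beta_k^{-1})<\infty$ together with Theorem~\ref{th:canonical_product} provides locally uniform convergence $R_N\to R$ away from the poles, and the weak inequalities defining an $\mathcal{S}$-function pass to such limits; this handles~\eqref{eq:order_alphas_betas_mer}.

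For~\eqref{eq:order_alphas_betas} the function is holomorphic on $\mathbb{C}\setminus(-\infty,0]$. I would truncate both the $\mu>0$ and the $\mu<0$ ranges at~$N$ and multiply numerator and denominator by~$z^{N}$ to clear the $z^{-1}$ factors, reducing to
\[
R_N(z)=C\cdot\frac{\prod_{1\le\mu\le N}(1+z/\beta_\mu)\,\prod_{1\le k\le N}(z+1/\beta_{-k})}{\prod_{1\le\nu\le N}(1+z/\alpha_\nu)\,\prod_{1\le k\le N}(z+1/\alpha_{-k})}.
\]
The chain $\cdots<\alpha_{-2}^{-1}<\beta_{-1}^{-1}<\alpha_{-1}^{-1}<\beta_{1}<\alpha_{1}<\cdots$ translates into the strict alternation ``zero, pole, zero, pole, \dots'' when the zeros and poles of $R_N$ are listed along $(-\infty,0)$ from $0$ outward, the first being the zero at $-1/\beta_{-N}$. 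This is exactly the interlacing pattern treated in the meromorphic step, so the same residue computation gives that $R_N$ is an $\mathcal{S}$-function; local uniform convergence then concludes.

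The bookkeeping in this last step is what requires the most care: one has to verify that after the substitution $z^{-1}\mapsto z$ in the $\mu<0$ factors the combined ordering of zeros and poles of $R_N$ along $(-\infty,0)$ is precisely the ``zeros-first'' alternation required by the partial fraction argument, and that the multiplicative constant inherited from clearing the $\beta_\mu^{\pm 1}$ and $\alpha_\nu^{\pm 1}$ remains positive. Once this is settled, the proof collapses to the single residue-sign count already used in the meromorphic case. Terminating products and the degenerate case $\beta_0=0$ in~\eqref{eq:order_alphas_betas_mer}, where $z+\beta_0$ becomes~$z$ and $R(0)=0$, fit into the same scheme as limiting or reduced versions of the same computation.
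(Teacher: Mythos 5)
Your proposal is correct and follows essentially the same route as the paper: truncate the products to a rational $\mathcal S$-function, verify the $\mathcal S$-property by a partial-fraction expansion whose residue signs are controlled by the strict interlacing, and pass to the locally uniform limit guaranteed by Theorem~\ref{th:canonical_product}. The only cosmetic differences are that you treat the meromorphic case~\eqref{eq:order_alphas_betas_mer} first and then reduce~\eqref{eq:order_alphas_betas} to it by clearing the $z^{-1}$ factors, whereas the paper handles~\eqref{eq:order_alphas_betas} directly and obtains~\eqref{eq:order_alphas_betas_mer} by dropping terms, and that you write the partial fractions as $K+\sum c_k/(z+\alpha_k)$ with $c_k<0$ rather than the paper's equivalent $\sum A_\nu z/(z+\alpha_\nu)$ with $A_\nu>0$.
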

\begin{proof}
    Suppose that~$F(z)$ has the form~\eqref{eq:order_alphas_betas} and denote
    \begin{equation}\label{eq:F_is_limit}
        F_n(z)\coloneqq C\frac{q_n(z)}{p_n(z)},\ww
        q_n(z)=\prod_{\nu=1}^{n} \left(1+\frac{z}{\beta_\nu}\right)
        \left( 1+\frac{z^{-1}}{\beta_{-\nu}}\right)
        ,\quad
        p_n(z)=\prod_{\nu=1}^{n} \left(1+\frac{z}{\alpha_\nu}\right)
        \left( 1+\frac{z^{-1}}{\alpha_{-\nu}}\right).
    \end{equation}
    Note that the
    product~$\prod_{\nu=-n}^{-1}\frac{\alpha_\nu}{\beta_\nu}
    =\prod_{\nu=-n}^{-1}\frac{\beta_\nu^{-1}}{\alpha_\nu^{-1}}<1$ is bounded. For
    each~$n\in\mathbb{Z}_{>0}$, the rational function
    \begin{equation}\label{eq:F_is_limit_ML}
        F_n(z)
        =
        C\cdot
        \prod_{\nu=-n}^{-1}\frac{\alpha_\nu}{\beta_\nu}
        +\sum_{\nu=1}^n\left(\frac{A_{\nu,n} z}{z+\alpha_\nu}
            +\frac{A_{-\nu,n} z}{z+\frac{1}{\alpha_{-\nu}}}
        \right),
        \ww
        A_{\nu,n}=\left.C\frac{q_n(z)}{z p'_n(z)}\right|_{z=-\alpha_\nu^{\sign\nu}}>0,
    \end{equation}
    is an~$\mathcal S$-function as each of its partial fractions is such. The
    condition~$\sum_{\nu\ne 0} \big(\alpha_\nu^{-1} + \beta_\nu^{-1}\big) <\infty$ implies the
    locally uniform convergence of each product in~\eqref{eq:order_alphas_betas} (see
    Theorem~\ref{th:canonical_product}) and, therefore, of the numerator~$q_n(z)$ and the
    denominator~$p_n(z)$ as~$n\to\infty$. Since the denominator is nonzero for~$z\not\le0$, the
    function~$F(z)$ is the limit of~$F_n(z)$ as~$n\to\infty$ uniform on compact subsets
    of~$\mathbb{C}\setminus(-\infty,0]$. Moreover,
    \setstretch{1.1}%
    \[\Im F(z)\cdot\Im z=\lim_{n\to\infty}F_n(z)\cdot\Im z\ge 0;\]
    the inequality is strict outside the real line due to the maximum principle for the harmonic
    function~$\Im F(z)$.

    The assertion that the expression~\eqref{eq:order_alphas_betas_mer} represents an
    $\mathcal S$-function follows by omitting from~\eqref{eq:F_is_limit_ML} terms that
    correspond to absent poles.
\end{proof}
\setstretch{1.1}%
\begin{lemma}\label{lemma:prop_S_exp}
    Let~$F(z)$ be a function of the form~\eqref{eq:order_alphas_betas}
    or~\eqref{eq:order_alphas_betas_mer} and let real numbers~$A,A_0,p$ be such
    that~$A^2+A_0^2 > 0$. Then~$G(z)\coloneqq e^{Az+\frac{A_0}z}z^pF(z)$ is not a mapping
    of~\Cp\ into itself and~$G(z_0)<0$ for some~$z_0\notin\mathbb{R}$.
\end{lemma}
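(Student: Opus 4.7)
The plan is to track the argument of $G$ along the positive imaginary axis $z=it$ with $t>0$, and to find a value of $t$ where $G(it)$ is a negative real number. By \eqref{eq:order_alphas_betas} or \eqref{eq:order_alphas_betas_mer}, the function $F$ has all its zeros and poles on $(-\infty,0]$, so it is analytic and nonzero on $\Cp$. Lemma~\ref{lemma:product_is_S} asserts that $F$ is an $\mathcal S$-function, so the continuous branch of $\arg F(z)$ on $\Cp$ takes values in $[0,\pi]$; in particular, $t\mapsto\arg F(it)$ is continuous and uniformly bounded on $(0,\infty)$.

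With $z=it$ one has $Az+A_0/z=i(At-A_0/t)$ and $z^{p}=t^{p}e^{ip\pi/2}$, so a continuous determination of the argument of $G(it)$ is
\begin{equation*}
\theta(t)=At-\frac{A_0}{t}+\frac{p\pi}{2}+\arg F(it),\qquad t>0.
\end{equation*}
Since $\arg F(it)\in[0,\pi]$ and the additive constant $p\pi/2$ is harmless, the asymptotic behaviour of $\theta$ is governed entirely by $At-A_0/t$. The hypothesis $A^{2}+A_{0}^{2}>0$ splits into two cases: if $A\neq 0$, then $\theta(t)\sim At\to\pm\infty$ as $t\to\infty$ (while $A_0/t\to 0$); if $A=0$ but $A_0\neq 0$, then $\theta(t)\sim -A_0/t\to\mp\infty$ as $t\to 0^{+}$. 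In either case, $\theta$ is continuous on $(0,\infty)$ and unbounded on at least one side.

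By the intermediate value theorem there is $t_{0}>0$ with $\theta(t_{0})\equiv\pi\pmod{2\pi}$. Setting $z_{0}\coloneqq it_{0}$, we have $z_{0}\in\Cp\setminus\mathbb{R}$ and $G(z_{0})$ is a negative real number; this simultaneously delivers the non-real point demanded by the lemma and exhibits a point of $\Cp$ whose image under $G$ lies outside $\Cp$. The only subtlety lies in guaranteeing that $\arg F(it)$ is well-defined and bounded, which is automatic once $F$ is recognised as a zero- and pole-free $\mathcal S$-function on $\Cp$; beyond that, the proof reduces to elementary asymptotics plus the intermediate value theorem, so I do not anticipate any serious obstacles.
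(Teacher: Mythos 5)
Your proof is correct and takes essentially the same approach as the paper: both track the argument of $G$ along the positive imaginary axis, use the boundedness of $\arg F(it)$ guaranteed by Lemma~\ref{lemma:product_is_S}, observe that the argument $At-A_0/t$ of the exponential factor is unbounded on $(0,\infty)$ when $A^2+A_0^2>0$, and conclude by the intermediate value theorem that $\arg G(it_0)=\pi$ for some $t_0>0$.
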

\begin{proof}
    Denote the multivalued argument function by~$\Arg$ and its principal value by~$\arg$. In the
    special case when~$F(z)$ is a real constant~$\Arg F(z)$ does not depend on~$z$.
    Otherwise~$0<\arg F(z)<\pi$ wherever~$\Im z>0$ by Lemma~\ref{lemma:product_is_S}, and
    therefore~$|\Arg F(ir_1)-\Arg F(i r_2)|<2\pi$ for any~$r_2>r_1>0$.
    Furthermore,~$\Arg(ir_1)^p=\Arg(ir_2)^p$
    and~$\Arg e^{iAr-i\frac{A_0}{r}}=\frac 1i\Ln e^{iAr-i\frac{A_0}{r}}$, where~$\Ln$ is the
    multivalued logarithm and~$r>0$. Branches of the logarithm differ by a constant, thus
    \[
        \frac {d}{dr}\Arg e^{iAr-i\frac{A_0}{r}}
        =\frac 1i\left(iAr-i\frac{A_0}{r}\right)'=A+\frac{A_0}{r^2}
    \]
    and (due to~$A^2+A_0^2>0$) the integral of this expression over~$(r_1,r_2)$ can be made
    arbitrarily big in absolute value through the choice of positive numbers~$r_1$ and~$r_2$.
    More specifically, we always can chose~$r_2>r_1>0$ so that
    \begin{gather*}
        \left|
            \Arg e^{iAr_2-i\frac{A_0}{r_2}}-\Arg e^{iAr_1-i\frac{A_0}{r_1}}
        \right|
        = \left|
            \int_{r_1}^{r_2} \frac {d}{dr}\Arg e^{iAr-i\frac{A_0}{r}}\, dr
        \right|
        \ge 4\pi,
        \quad\text{and hence}\\
        \left|
            \Arg G(ir_2)-\Arg G(i r_1)
        \right|
        > 
        \left|
            \Arg e^{iAr_2-i\frac{A_0}{r_2}}-\Arg e^{iAr_1-i\frac{A_0}{r_1}}
        \right|
        -\left|
            \Arg F(ir_2)-\Arg F(i r_1)
        \right|
        >2\pi.
    \end{gather*}
    In other words, the interval~$(r_1,r_2)$ contains at least one point~$r$ such
    that~$\arg G(ir)=\pi$, that is~$G(z_0)<0$ with~$z_0=ir$.
\end{proof}
\begin{lemma}\label{lemma:prop_S}
    Given~$F(z)$ of the form~\eqref{eq:order_alphas_betas} or~\eqref{eq:order_alphas_betas_mer}
    not equal identically to~$Cz$ or~$C$, the function~$z^pF(z)$ with any real~$p\notin(-1,0]$
    and~$A,A_0\in\mathbb{R}$ cannot be a mapping of~$\mathbb{C}_+$ into itself; the
    function~$\frac{z}{F(z)}$ is an $\mathcal{S}$-function of the
    form~\eqref{eq:order_alphas_betas} or~\eqref{eq:order_alphas_betas_mer}. Moreover, under the
    additional condition~$p\ne-1$ there exist a point~$z_0\notin\mathbb{R}$ such
    that~$z_0^pF(z_0)<0$.
\end{lemma}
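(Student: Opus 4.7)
The proof splits naturally into three parts, corresponding to the three assertions.

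First, to establish that $z/F(z)$ has the form~\eqref{eq:order_alphas_betas} or~\eqref{eq:order_alphas_betas_mer}, I would proceed purely algebraically by formally inverting the product representation of $F$. This swaps the $\alpha$- and $\beta$-factors, but on its own violates the required interlacing chain, which must begin with a $\beta$. The factor $z$ of $z/F(z)$ will be used to repair the ordering: in the two-sided case~\eqref{eq:order_alphas_betas} the identity
\[
z\bigl(1+z^{-1}/\alpha_{-1}\bigr)=\alpha_{-1}^{-1}\bigl(1+z\alpha_{-1}\bigr)
\]
migrates one factor from the negative-index side to the positive-index side, while in the meromorphic case~\eqref{eq:order_alphas_betas_mer} the factor $z$ plays the role of a new zero at $\beta_0=0$. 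A direct check shows that the new chain still satisfies the interlacing inequalities, and Lemma~\ref{lemma:product_is_S} then identifies $z/F(z)$ as an $\mathcal{S}$-function.

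Next, to prove that $G(z)\coloneqq z^pF(z)$ cannot map~\Cp\ into itself for $p\notin(-1,0]$, I will analyse the boundary values of $\arg G$ approached from~\Cp\ along the negative real axis. Because $F$ is neither $C$ nor $Cz$, the interlacing chain is nonempty and hence $F$ attains both positive and negative values on $(-\infty,0)$; so the two boundary values $\arg G(-r+i0^+)\equiv p\pi$ and $(p+1)\pi$ modulo~$2\pi$ both occur. For $G(\Cp)\subseteq\Cp$, both quantities must lie in $[0,\pi]\pmod{2\pi}$, and an elementary check shows this forces $p\in\mathbb{Z}$; this already eliminates every non-integer $p$ outside $(-1,0]$. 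For a non-zero integer~$p$ I will move into the interior of~\Cp\ and use a local expansion at a suitable pole $-\alpha$ of~$F$ on $(-\infty,0)$: the interlacing fixes the sign of $\mathrm{Res}(F,-\alpha)$, and the expansion $G(z)\sim c/(z+\alpha)$ on a small upper semicircle $z=-\alpha+\epsilon e^{i\theta}$ yields $\arg G\approx\arg c-\theta$, which exits $[0,\pi]$ as $\theta$ ranges in $(0,\pi)$.

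The strengthening asserting the existence of $z_0\notin\mathbb{R}$ with $z_0^pF(z_0)<0$ will follow from the intermediate value theorem applied to $\arg G$ along the same semicircle (or along a connecting arc in~\Cp\ between regions of distinct limiting values): since $\arg G$ varies continuously and spans an interval containing~$\pi$, some interior point $z_0\in\Cp$ satisfies $\arg G(z_0)=\pi$, i.e.\ $G(z_0)<0$. The exclusion of $p=-1$ is forced, because then $\arg\bigl(z^{-1}F(z)\bigr)=\arg F(z)-\arg z$, and as both $\arg F(z)$ and $\arg z$ belong to $(0,\pi)$ for $z\in\Cp$, the difference lies strictly in $(-\pi,\pi)$ and never reaches $\pi$. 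The hardest step will be the integer cases $|p|\ge 1$, where the boundary analysis alone is inconclusive (the boundary values of $\arg G$ fall precisely at the endpoints $\{0,\pi\}\pmod{2\pi}$); there one must combine it with the interior residue calculation above, paying attention to the fact that the sign of $\mathrm{Res}(F,-\alpha)$ depends on which pole~$-\alpha$ of~$F$ is chosen, so a careful case analysis (for example, comparing $-\alpha_0$ with the remaining $-\alpha_\nu$'s in~\eqref{eq:order_alphas_betas_mer}) is needed to locate a pole at which the construction succeeds.
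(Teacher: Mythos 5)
Your first part---showing that $z/F(z)$ again has the form \eqref{eq:order_alphas_betas} or \eqref{eq:order_alphas_betas_mer} by formally inverting the product and migrating one factor---matches the paper's argument, and your $p=-1$ observation is equivalent to the paper's identity $\frac{F(z)}{z}=\overline{\bigl(\frac z{F(z)}\bigr)}\cdot\frac{|F(z)|^2}{|z|^2}$. But your plan for the non-mapping claim has a real gap, and it is not the one you flag. The residue of an $\mathcal S$-function at any of its real poles is \emph{always} negative---this is immediate from the partial-fraction form \eqref{eq:F_is_limit_ML}, where each term $\frac{A z}{z+\alpha}$ with $A>0$ contributes residue $-A\alpha<0$---so there is no choice of pole to be made. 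With $z=-\alpha+\epsilon e^{i\theta}$ and $c=\mathrm{Res}(F,-\alpha)<0$, the leading term gives $\arg G\approx p\pi+\arg c-\theta=(p+1)\pi-\theta$, so $\arg G$ sweeps the interval $(p\pi,(p+1)\pi)$. Modulo $2\pi$ this is $(\pi,2\pi)$ when $p$ is odd, a contradiction, but it is $(0,\pi)$ when $p$ is \emph{even}, and then the local picture near the pole is perfectly compatible with $G(\Cp)\subseteq\Cp$. (Expanding near a real zero of $F$ instead gives $\arg G\approx p\pi+\theta$ and sweeps the same interval, so it fails for the same parities.) A second difficulty: $F$ of form \eqref{eq:order_alphas_betas} with a terminating chain need not have any poles at all---e.g.\ $F(z)=C(1+z/\beta_1)$---so a residue calculation may have nowhere to start. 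Thus the boundary analysis excludes non-integer $p$ and your local expansion excludes odd $p$, but nonzero \emph{even} integers slip through.

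The paper sidesteps both problems with a global argument over the full upper semicircle $\{|z|=r,\ \Im z\ge 0\}$, where $r$ is chosen so that $F(-r)<0$ (such $r$ exists because the non-constant $\mathcal S$-function $F$ has a negative simple zero). Along this semicircle $\arg F$ varies continuously from $0$ to $\pi$, remaining in $(0,\pi)$ because $\Im F>0$ on $\Cp$, while $\arg z^p$ increases monotonically from $0$ to $p\pi$; hence the continuous branch of $\arg\bigl(z^pF(z)\bigr)$ with value $0$ at $z=r$ reaches $(p+1)\pi>\pi$ at $z=-r$, and by the intermediate value theorem equals $\pi$ at some interior $z_0$, giving $z_0^pF(z_0)<0$. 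This settles every real $p>0$ at once, integer or not; the case $p<-1$ is reduced to it by applying the same reasoning to $z^{-p-1}\cdot\frac z{F(z)}$. You should replace your local residue step with this global increment argument.
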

\begin{proof}%[Proof of Lemma~\ref{lemma:prop_S}]
    The reciprocal of the product~\eqref{eq:order_alphas_betas} can be expressed as
    \begin{equation}\label{eq:recipr_to_S}
        \frac 1{F(z)}= C \frac
        {\prod_{\nu>0} \left(1+\frac{z}{\alpha_\nu}\right)}
        {\prod_{\mu>0} \left(1+\frac{z}{\beta_\mu}\right)}
        \frac{\prod_{\nu<0} \left(1+\frac{z^{-1}}{\alpha_\nu}\right)}
        {\prod_{\mu<0}\left(1+\frac{z^{-1}}{\beta_\mu}\right)}
        =
        \frac C{z\alpha_{-1}} \frac
        {\left(z\alpha_{-1}+1\right)\prod_{\nu>0} \left(1+\frac{z}{\alpha_\nu}\right)}
        {\prod_{\mu>0} \left(1+\frac{z}{\beta_\mu}\right)}
        \frac{\prod_{\nu<-1}\left(1+\frac{z^{-1}}{\alpha_\nu}\right)}
        {\prod_{\mu<0} \left(1+\frac{z^{-1}}{\beta_\mu}\right)}.
    \end{equation}
    Therefore, relabelling the~$\beta_\mu\mapsto \widetilde\alpha_\mu$ for all~$\mu\ne0$;
    $\alpha_{-1}\mapsto\widetilde\beta_1$ and~$\alpha_\nu\mapsto \widetilde\beta_{\nu-1}$ for
    all~$\nu\notin\{0,1\}$ yields that~$\frac z{F(z)}$ has the
    form~\eqref{eq:order_alphas_betas}. An analogous reasoning works for the reciprocal
    of~\eqref{eq:order_alphas_betas_mer}.

    Now, let~$p>0$. Non-constant functions of the form~\eqref{eq:order_alphas_betas}
    or~\eqref{eq:order_alphas_betas_mer} have at least one negative simple zero. Therefore,
    there exists~$r>0$ such that~$F(-r)<0$. On the semicircle~$\{z\in\mathbb{C}_+:|z|=r\}$, we
    have the following conditions:
    \[
        0<\Arg F(z)-\Arg F(r)<
        \Arg F(-r)-\Arg F(r)=\pi
        \an
        \Arg z^p-\Arg r^p = p\arg z<p\pi.
    \]
    The above inequalities yield that
    \[
        \Arg (-r)^pF(-r)-\Arg r^pF(r) = p\pi+\pi>\pi,
    \]
    \setstretch{1.2}%
    so the increment~$\Arg z^pF(z)-\Arg z^pF(z)$ equals to~$\pi$ at least at one
    point~$z_0\in\mathbb{C}_+$ satisfying~$|z_0|=r$. In particular,~$F(z_0)< 0$. If~$p<-1$, then
    the previous reasoning implies that the function~$z^{-p-1}\frac z{F(z)}$ is negative at some
    point~$z_0$ of the upper half-plane; therefore,~$z_0^{p}F(z)=z_0^{p+1}\frac{F(z_0)}{z_0}< 0$
    as well.

    The remaining case of~$p=-1$ follows from the identity
    \(%\displaystyle
        \frac {F(z)}z = \overline {\left(\frac z{F(z)}\right)} \cdot \frac {|F(z)|^2}{|z|^2}
    \) because~$\Im\frac z{F(z)}>0$ wherever~$\Im z>0$.
\end{proof}
\setstretch{1.2}%
\begin{proof}[Proof of Lemma~\ref{lemma:prop_S1}]
    On account of Lemma~\ref{lemma:product_is_S}, it is enough to prove that the function~$F(z)$
    is an $\mathcal S$-function only if has the form~\eqref{eq:order_alphas_betas}
    or~\eqref{eq:order_alphas_betas_mer}. Since~$F(z)$ is regular for~$z>0$, the poles of~$p(z)$
    and~$q(z)$ coincide and have the same orders. Therefore, the function~$F(z)$ is positive
    when~$z>0$. As is shown in Lemma~\ref{lemma:prop_S_exp},~$F(z)$ does not have exponential
    factors; that is, the exponential factors in the representations~\eqref{eq:funct_gen_dtps}
    of the functions~$p(z)$ and~$q(z)$ coincide. As a result, $F(z)=z^pG(z)$, where~$p$ is
    integer and~$G(z)$ is an ~$\mathcal{S}$-function. The exponent~$p$ must then be zero by
    Lemma~\ref{lemma:prop_S}.
\end{proof}
\section{Total nonnegativity and interlacing zeros}
Assume in this section that~$p(z)\coloneqq\sum_{k=-\infty}^\infty\,a_kz^k$,
~$q(z)\coloneqq\sum_{k=-\infty}^\infty\,b_kz^k$
and~$\widetilde p(z)\coloneqq zp(z)=\sum_{k=-\infty}^\infty\,a_kz^{k+1}$.
\begin{lemma}\label{lemma:T_via_HH}
    If the matrix~$H(p,q)$ is totally nonnegative% and~$\widetilde p(z)\coloneqq zp(z)$%
    , then for arbitrarily taken nonnegative numbers~$A$ and~$B$ both matrices~$T(Ap+Bq)$
    and~$T(Aq+B\widetilde p)$ are totally nonnegative.
\end{lemma}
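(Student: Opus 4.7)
The plan is to derive the rows of $T(Ap+Bq)$ and $T(Aq+B\widetilde p)$ as nonnegative linear combinations of rows of $H(p,q)$ that come in \emph{non-overlapping} consecutive pairs, and then invoke multilinearity of the determinant.

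First, I would examine the Hurwitz-type matrix \eqref{eq:Hpq_def}. Inspecting entries, the $(2k-1)$th row of $H(p,q)$ is the sequence $(a_{j-k})_{j}$ and the $2k$th row is $(b_{j-k})_{j}$ (same horizontal shift by $k-1$). Consequently, for every $k\ge 1$,
\[
A\cdot(\text{row }2k-1\text{ of }H(p,q)) + B\cdot(\text{row }2k\text{ of }H(p,q))
\;=\; \bigl(Aa_{j-k}+Bb_{j-k}\bigr)_{j},
\]
which is precisely the $k$th row of $T(Ap+Bq)$. Similarly, since the coefficient of $z^j$ in $\widetilde p(z)=zp(z)$ is $a_{j-1}$, one checks that
\[
A\cdot(\text{row }2k\text{ of }H(p,q)) + B\cdot(\text{row }2k+1\text{ of }H(p,q))
\;=\; \bigl(Ab_{j-k}+Ba_{(j-k)-1}\bigr)_{j},
\]
which is the $k$th row of $T(Aq+B\widetilde p)$. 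The key structural point is that in both presentations, as $k$ ranges, the pairs of rows of $H(p,q)$ used are disjoint and in increasing order: $\{1,2\},\{3,4\},\{5,6\},\dots$ for the first identity, and $\{2,3\},\{4,5\},\{6,7\},\dots$ for the second.

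Next, fix a $k\times k$ minor of, say, $T(Ap+Bq)$, given by rows $i_1<\dots<i_k$ and columns $j_1<\dots<j_k$. Applying multilinearity of the determinant along each of its $k$ rows, I expand it as
\[
\sum_{S\subseteq\{1,\dots,k\}} A^{|S|}B^{k-|S|}\,
M_S,
\]
where $M_S$ is the minor of $H(p,q)$ whose rows are $\{\,2i_\ell-1: \ell\in S\,\}\cup\{\,2i_\ell: \ell\notin S\,\}$ and whose columns are $j_1,\dots,j_k$. Because the pairs $\{2i_\ell-1,2i_\ell\}$ are pairwise disjoint and in increasing order, any selection yields $k$ distinct rows in increasing order, so $M_S$ is a genuine $k\times k$ minor of $H(p,q)$, hence nonnegative by hypothesis. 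With $A,B\ge 0$, every summand is nonnegative and the whole minor is nonnegative. The same multilinearity argument, using the pairs $\{2i_\ell,2i_\ell+1\}$, establishes total nonnegativity of $T(Aq+B\widetilde p)$.

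There is essentially no obstacle beyond careful index bookkeeping: the substantive content is the observation that, owing to the interleaved $a$-row/$b$-row structure of $H(p,q)$, the row-sums producing $T(Ap+Bq)$ and $T(Aq+B\widetilde p)$ draw on mutually disjoint consecutive pairs, which is exactly what makes the multilinear expansion stay inside the cone of minors of $H(p,q)$.
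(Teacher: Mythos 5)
Your proof is correct and is essentially the paper's argument in unpacked form: the paper writes the identities $T(Ap+Bq)=H^{\mathsf T}(A,B)\,H(p,q)$ and $T(Aq+B\widetilde p)=H^{\mathsf T}(A,B)\,H(q,\widetilde p)$ (the latter being $H(p,q)$ with rows reindexed by a shift of one) and invokes the Cauchy--Binet formula, which, because $H^{\mathsf T}(A,B)$ has a single pair of nonzero entries $A,B$ per row in disjoint column pairs, reduces term-by-term to exactly your multilinear expansion $\sum_{S}A^{|S|}B^{k-|S|}M_S$. So the two proofs are the same; you simply carry out the Cauchy--Binet expansion by hand instead of citing it.
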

\begin{proof}
    The matrices~$H(p,q)$ and
    \[
        H(q,\widetilde p)={\begin{pmatrix}
                \ddots & \vdots & \vdots & \vdots & \vdots & \vdots & \vdots & \iddots\\
                \hdots & b_0& b_1& b_2 & b_3 & b_4 & b_5 & \hdots\\
                \hdots & a_{-1} & a_0& a_1& a_2 & a_3 & a_4 & \hdots\\
                \hdots & b_{-1} & b_0 & b_1 & b_2 & b_3 & b_4 & \hdots\\
                \hdots & a_{-2} & a_{-1} & a_0 & a_1 & a_2 & a_3 & \hdots\\
                \hdots & b_{-2} & b_{-1} & b_0 & b_1 & b_2 & b_3 & \hdots\\
                \hdots & a_{-3} & a_{-2} & a_{-1} & a_0 & a_1 & a_2 & \hdots\\
                \iddots & \vdots & \vdots & \vdots & \vdots & \vdots & \vdots & \ddots
            \end{pmatrix}}.
    \]
    coincide up to a shift in indexation; that is,~$H(q,\widetilde p)$ can be obtained by
    increasing the indices of rows in~$H(p,q)$ by~$1$. In particular, the
    matrix~$H(q,\widetilde p)$ is totally nonnegative.

    Observe that
    \begin{equation}\label{eq:T_via_HH}
        T(Ap+Bq) = H^{\textsf{T}}(A,B)\,H(p,q)
        \an
        T(Aq+B\widetilde p) = H^{\textsf{T}}(A,B)\,H(q,\widetilde p),
    \end{equation}
    where the auxiliary totally nonnegative matrix~$H^{\textsf{T}}(A,B)$ is the transpose
    of~$H(A,B)$:
    \[
    H^{\textsf{T}}(A,B)=
    \begin{pmatrix}
        \ddots & \vdots & \vdots & \vdots & \vdots & \vdots & \vdots & \iddots\\
        \hdots & A & B & 0 & 0 & 0 & 0 & \hdots\\
        \hdots & 0 & 0 & A & B & 0 & 0 & \hdots\\
        \hdots & 0 & 0 & 0 & 0 & A & B & \hdots\\
        \iddots & \vdots & \vdots & \vdots & \vdots & \vdots & \vdots & \ddots
    \end{pmatrix}
    =\big(h_{ij}\big)_{i,j=-\infty}^{\infty},\ww
    h_{ij}=\begin{cases}
        A&\text{if } j=2i-1,\\
        B&\text{if } j=2i,\\
        0&\text{otherwise}.
    \end{cases}
    \]
    Therefore, applying the Cauchy-Binet formula to the expressions~\eqref{eq:T_via_HH} yields
    that all minors of the matri\-ces~$T(Ap+Bq)$ and~$T(Aq+B\widetilde p)$ must be nonnegative.
\end{proof}

\setstretch{1.2}%
\begin{lemma}\label{lemma:TT_TNN_conv}
    Let the matrix~$T(Ap+Bq)$ be totally nonnegative for every choice of the numbers~$A,B\ge 0$.
    If~$T(p)$ has a nonzero minor of order~$2$, then there exists a nonempty
    annulus~$\mathfrak A$ centred at the origin where both
    series~$\phi(z;A,B)\coloneqq Ap(z)+Bq(z)$ and~$\psi(z;A,B)\colonequals Aq(z)+Bzp(z)$ with
    any choice of~$A,B\ge 0$ converge absolutely. If all minors of~$T(p)$ of order~$2$ are equal
    to zero, then~$b_0p(z)=a_0q(z)$ coefficient-wise.
\end{lemma}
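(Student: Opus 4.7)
The plan is to apply Theorem~\ref{th:E-AESW} to $p+B^{*}q$ for a well-chosen $B^{*}>0$. The crucial intermediate claim is that at most one nonnegative $B$ can render the series $p+Bq$ degenerate (in the sense of Theorem~\ref{th:E-AESW}: $a_n+Bb_n=c\,r^n$ for every $n$): if two distinct $B_1<B_2$ both made $p+B_iq$ degenerate, subtraction would express $b_n$ as a linear combination of two geometric sequences and then $a_n$ similarly; two-sided nonnegativity $a_n,b_n\ge 0$ over all $n\in\mathbb{Z}$ would then force the two ratios to coincide (otherwise the wrong side dominates at $n\to\pm\infty$), making $a_n$ purely geometric and contradicting the nonzero $2\times 2$ minor of $T(p)$. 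Thus some $B^{*}\in\{1,2\}$ makes $p+B^{*}q$ non-degenerate; Theorem~\ref{th:E-AESW} then provides an annulus $\mathfrak A$ of absolute convergence for $\sum(a_n+B^{*}b_n)z^n$, and the bounds $0\le a_n\le a_n+B^{*}b_n$ and $0\le B^{*}b_n\le a_n+B^{*}b_n$ yield absolute convergence of both $p$ and $q$ in $\mathfrak A$. By comparison the series $\phi(z;A,B)=Ap+Bq$ and $\psi(z;A,B)=Aq+Bzp$ then converge absolutely in $\mathfrak A$ for every $A,B\ge 0$.

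For the second assertion, the vanishing of every $2\times 2$ minor of $T(p)$ yields $a_k a_l=a_m a_n$ whenever $k+l=m+n$, which in the non-trivial case forces $a_n=a_0\rho^n$ with $a_0,\rho>0$. The corresponding doubly infinite geometric series $a_0\sum(\rho z)^n$ diverges at every $z\ne0$; consequently, for each $B>0$, the series $p+Bq$ cannot be non-degenerate, since otherwise Theorem~\ref{th:E-AESW} would force its convergence in some annulus and hence (by coefficient-wise comparison $a_n\le a_n+Bb_n$) the convergence of $p$ there, contradicting the observed divergence. So $a_n+Bb_n=(a_0+Bb_0)^{1-n}(a_1+Bb_1)^n$ as an identity in $B$ for every $n\in\mathbb{Z}$; matching at $B=0$ recovers $a_n=a_0\rho^n$, while the linear-in-$B$ matching gives $b_n=\rho^{n-1}\bigl(nb_1-(n-1)\rho b_0\bigr)$ for every $n\in\mathbb{Z}$. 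Nonnegativity of the affine expression $n(b_1-\rho b_0)+\rho b_0$ for all integers $n$ forces $b_1=\rho b_0$, and thus $b_n=b_0\rho^n$, yielding $b_0\,p(z)=a_0\,q(z)$ coefficient-wise.

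The main obstacle is the uniqueness of the bad value of $B$ together with the rigid linear analysis over $n\in\mathbb{Z}$; both arguments rely on comparing extremes as $n\to\pm\infty$, which is the feature distinguishing the doubly infinite setting from the classical singly infinite one. The trivial cases $p\equiv 0$ (giving $a_0=0$, so $b_0\,p=0=a_0\,q$) and $b_0=0$ (which collapses back to $q\equiv 0$) are handled by direct verification.
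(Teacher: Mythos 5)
Your proof is correct and takes a genuinely different route from the paper's, although both rely crucially on Theorem~\ref{th:E-AESW}. For the convergence part, the paper tracks the monotone ratios $a_n/a_{n+1}$, reads off from a $2\times 2$ minor of $T(Ap+Bq)$ the inequality $(C-a_n/a_{n+1})^2\le (a_n/a_{n+1}-a_{n-1}/a_n)\,a_n/a_{n+1}$ (where $C=b_{n-1}/b_n$ would be constant if $q$ were degenerate), passes to the limits $n\to\pm\infty$ to force $r=C=R$, and then repeats the argument with $p+q$ in place of $q$. You instead observe that two distinct $B_1<B_2$ both making $p+B_iq$ geometric would force $p$ to be geometric by linear elimination and sign considerations, so at most one $B\ge 0$ is bad; picking a good $B^*>0$ and exploiting the term-wise domination $a_n,\,B^*b_n\le a_n+B^*b_n$ then yields the common annulus directly. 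Your route avoids the completing-the-square step and is closer in spirit to a general-position argument, whereas the paper's route produces the explicit monotonicity information about $a_n/a_{n+1}$ that it reuses elsewhere. For the degenerate part, the paper again appeals to the same $2\times 2$-minor analysis ``with exchanged roles of $p$ and $q$'' to deduce $b_n/b_{n+1}=r$; you instead extract the linear-in-$B$ Taylor coefficient from the identity $a_n+Bb_n=(a_0+Bb_0)^{1-n}(a_1+Bb_1)^n$, valid for every $B>0$, and use the nonnegativity of the resulting integer-affine expression to force $b_1=\rho b_0$. This is a cleaner, self-contained computation. Two small points worth making explicit in a polished write-up: the degenerate form in Theorem~\ref{th:E-AESW} requires $f_0,f_1>0$, so in your ``at most one bad $B$'' step it is legitimate to write $a_n+B_ib_n=c_ir_i^n$ with $c_i,r_i>0$ (here nonnegativity of the entries of the TNN matrices $T(p)$ and $T(q)$, obtained from the hypothesis at $A=1,B=0$ and $A=0,B=1$, is used); and the case $B_1=0$ (degenerate $p$) is already ruled out by the hypothesis that $T(p)$ has a nonzero $2\times 2$ minor, which is why only at most one positive $B$ can be bad.
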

\begin{proof}
    Given a real number~$a$ let~$\lfloor a \rfloor$ denote the maximal integer less then or
    equal to~$a$. The straightforward consequence of the total nonnegativity of~$T(p)$ is
    \( a_{m+1}a_{n}\le a_{n+1}a_{m} \) for all indices~$n<m$. If~$a_na_{k}>0$ for some~$k>n$,
    then we therefore
    have~\(0<a_na_k\le a_{n+1}a_{k-1}\le\cdots\le
    a_{\left\lfloor\frac{n+k}2\right\rfloor}a_{\left\lfloor\frac{n+k+1}2\right\rfloor}\), which
    implies~$a_m>0$ whenever~$m=n+1,n+2,\dots,k-1$; in other words, the series~$p(z)$ has no gaps.
    Moreover,
    \[
    0\le\frac{a_{n}}{a_{n+1}}\le\frac{a_{m}}{a_{m+1}}
    \]
    provided that~$n<m$ and the denominators are nonzero. Accordingly,~$p(z)$ converges in the
    annulus~$0\le r<z<R\le+\infty$ by the ratio test (unless~$r=R$), where
    \[
    r\coloneqq
    \begin{cases}
        \displaystyle
        \lim_{k\to-\infty}\frac{a_k}{a_{k+1}},&\text{if }a_k>0\text{ for all $-k$ big enough}\\
        0,& \text{otherwise}
    \end{cases}
    \le
    R\coloneqq
    \begin{cases}
        \displaystyle
        \lim_{k\to+\infty}\frac{a_k}{a_{k+1}},&\text{if }a_k>0\text{ for all $k$ big enough}\\
        +\infty,& \text{otherwise}
    \end{cases}
    \!.
    \]
    Since the matrix~$T(p)$ has a nonzero minor of order two, Theorem~\ref{th:E-AESW} implies
    that the annulus~$r<z<R$ for the series~$p(z)$ is not empty.
    
    Suppose that the series~$q(z)$ has an empty annulus of convergence; then
    Theorem~\ref{th:E-AESW} implies~$b_n^2=b_{n+1}b_{n-1}\ne 0$ for any integer~$n$. The
    estimate
    \begin{equation*}
        0\le
        \begin{vmatrix}
            a_n+B b_n&a_{n+1}+B b_{n+1}\\
            a_{n-1}+B b_{n-1}&a_n+B b_n
        \end{vmatrix}
        = (2a_nb_n-a_{n+1}b_{n-1}-a_{n-1}b_{n+1}) B + a_n^2-a_{n-1}a_{n+1}
    \end{equation*}
    holds true for every~$B>0$, and hence~$0\le 2a_nb_n-a_{n+1}b_{n-1}-a_{n-1}b_{n+1}$. Since
    the ratio~$C\coloneqq\frac{b_{n-1}}{b_n}$ is independent of~$n$, the inequality
    \begin{equation}\label{eq:ineq_coef_pq}
        0\le 2a_n\frac{b_{n}}{b_{n+1}}-a_{n+1}\frac{b_{n-1}}{b_{n+1}}-a_{n-1}
        = -a_{n+1}C^2 + 2a_n C-a_{n-1}
    \end{equation}
    must be satisfied for each~$n$. Nevertheless, the condition~$a_n=0\ne a_{n-1}+a_{n+1}$ for
    some~$n$ gives the contradiction~$0\le -a_{n+1}C^2-a_{n-1}<0$. Thus, all coefficients of the
    series~$p(z)$ are nonzero and the estimate~\eqref{eq:ineq_coef_pq} is equivalent to
    \[
    \left(C-\frac{a_n}{a_{n+1}}\right)^2\le
    \left(\frac{a_n}{a_{n+1}}\right)^2-\frac{a_{n-1}}{a_{n+1}} =
    \left(\frac{a_n}{a_{n+1}}-\frac{a_{n-1}}{a_{n}}\right)\frac{a_{n}}{a_{n+1}}.
    \]
    Taking limits as~$n\to\pm\infty$ then yields the equality~\(r=C=R\).

    Suppose that~$T(p)$ has a nonzero minor of order~$2$. Theorem~\ref{th:E-AESW} then implies
    that the series~$p(z)$ converges; that is we have the contradiction~$r<R$ unless~$q(z)$
    converges in some annulus. Analogously, the series~$\phi(z;1,1)=p(z)+q(z)$ has a nonempty
    annulus~$\mathfrak A$ of convergence as well (otherwise the above reasoning
    for~$\phi(z;1,1)$ instead of~$q(z)$ would imply that~$r=R$). Moreover, both
    series~$\phi(z;A,B)= Ap(z)+Bq(z)$ and~$\psi(z;A,B)= Aq(z)+Bzp(z)$ with any choice
    of~$A,B\ge 0$ are absolutely convergent in~$\mathfrak A$ since all coefficients of the
    involved series are nonnegative.

    Suppose that~$T(p)$ has no nonzero minors of order~$2$. If~$p(z)\equiv 0$ or~$q(z)\equiv 0$,
    then~$b_0p(z)\equiv a_0q(z)\equiv 0$ which implies the lemma in this case. Otherwise, the
    series~$p(z)$ diverges by Theorem~\ref{th:E-AESW} and~$r=R=\frac{a_n}{a_{n+1}}$ for all~$n$.
    Thus, the above part of the proof with the exchanged roles of~$p(z)$ and~$q(z)$ yields
    that~$\frac{b_n}{b_{n+1}}=r$ whenever~$n\in\mathbb Z$. Consequently, the
    equality~$a_0q(z)=b_0p(z)$ is satisfied in the sense of formal power series, \emph{i.e.}
    coefficient-wise.
\end{proof}
\begin{corollary}\label{cor:total-nonn-interl}
    If the totally nonnegative matrix~$H(p,q)$ has a nonzero minor of order~$2$
    and~$p(z)\not\equiv 0$, then~$T(p)$ has a nonzero minor of order~$2$ as well.
\end{corollary}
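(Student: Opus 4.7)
The plan is to argue by contrapositive: assuming $T(p)$ has no nonzero minor of order two while $p\not\equiv0$, I will exhibit $H(p,q)$ as an outer product, so that every $2\times 2$ minor of $H(p,q)$ automatically vanishes, contradicting the hypothesis.

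The first step is to extract a geometric structure for the coefficients $a_n$. The vanishing of all $2\times 2$ minors of $T(p)$ yields in particular the identities $a_m^2=a_{m-1}\,a_{m+1}$ for every $m\in\mathbb Z$. Combined with $p\not\equiv0$ and the observation that a single $a_m=0$ would force both neighbours to vanish, I deduce that every $a_m$ is nonzero and that the ratio $a_{m+1}/a_m$ is independent of $m$; denoting this common positive value by $s$, I obtain $a_n=a_0 s^n$ with $a_0\ne0$.

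Next, I would transfer this structure to the sequence $(b_n)$ via the preceding lemmas. Total nonnegativity of $H(p,q)$ and Lemma~\ref{lemma:T_via_HH} ensure that $T(Ap+Bq)$ is totally nonnegative for every $A,B\ge0$. Feeding this into the final clause of Lemma~\ref{lemma:TT_TNN_conv} under the degeneracy hypothesis on $T(p)$ yields the coefficient-wise identity $b_0\,p(z)=a_0\,q(z)$, and hence $b_n=(b_0/a_0)a_n=b_0 s^n$ for every $n\in\mathbb Z$.

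Finally, reading the entries directly from the display in~\eqref{eq:Hpq_def}, I would write row $2k-1$ at column $j$ as $a_{j-k}=a_0 s^{-k}\cdot s^j$ and row $2k$ at column $j$ as $b_{j-k}=b_0 s^{-k}\cdot s^j$, so that $H(p,q)=u\,v^{\mathsf T}$ with $v_j=s^j$, $u_{2k-1}=a_0 s^{-k}$ and $u_{2k}=b_0 s^{-k}$. This matrix has rank at most one, so every minor of order two vanishes. I do not foresee any real obstacle beyond careful bookkeeping of the row/column indices; the essential point is that the degeneracy of $T(p)$ propagates, through Lemmas~\ref{lemma:T_via_HH} and~\ref{lemma:TT_TNN_conv}, to a full rank-one factorisation of $H(p,q)$.
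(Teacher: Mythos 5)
Your proof is correct, and it takes a genuinely different route from the paper's. Both arguments rest on the same two ingredients — Lemma~\ref{lemma:T_via_HH} (to get total nonnegativity of $T(Ap+Bq)$) and the final clause of Lemma~\ref{lemma:TT_TNN_conv} (to propagate the degeneracy from $T(p)$ to the relation $b_0p=a_0q$). The paper, however, argues \emph{forward}: it fixes the given nonzero $2\times 2$ minor of $H(p,q)$, distinguishes the cases in which its entries come from one series ($a$'s or $b$'s only) or from both, and in each case derives a contradiction with the constancy of $\frac{a_m}{a_{m+k}}=\frac{b_n}{b_{n+k}}$ forced by Lemma~\ref{lemma:TT_TNN_conv}. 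You instead argue by \emph{contrapositive}: assuming all order-two minors of $T(p)$ vanish, you first observe (via $a_m^2=a_{m-1}a_{m+1}$ and the fact that $a_m=0$ would kill both $a_{m\pm1}$, since $a_{m\pm1}^2=a_{m}a_{m\pm 2}=0$) that the $a_n$ form a nowhere-zero geometric progression $a_n=a_0s^n$; Lemma~\ref{lemma:TT_TNN_conv} then carries this over to $b_n=b_0s^n$, and the indexation $H(p,q)_{2k-1,j}=a_{j-k}$, $H(p,q)_{2k,j}=b_{j-k}$ (which you read off correctly from \eqref{eq:Hpq_def}) yields the rank-one factorisation $H(p,q)=uv^{\mathsf T}$ at once. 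The payoff of your approach is structural clarity: the case analysis disappears, and you see immediately \emph{why} all order-two minors of $H(p,q)$ vanish. One small remark: the positivity of $s$ that you assert without comment follows from the entries of the totally nonnegative matrix $H(p,q)$ being nonnegative (as $1\times1$ minors), though in fact your rank-one argument works regardless of the sign of~$s$.
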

\begin{proof}
    Indeed, Lemma~\ref{lemma:T_via_HH} implies that all minors of the matrix~$T(Ap+Bq)$ are
    nonnegative for every choice of the numbers~$A,B\ge 0$, so we can apply
    Lemma~\ref{lemma:TT_TNN_conv}. We have two possibilities: the first is that the entries
    involved in the nonzero minor of~$H(p,q)$ only come from one of the involved series. That
    is, this minor is actually a minor of~$T(p)$, or of~$T(q)$ and hence~$T(p)$ also has a
    nonzero minor of order~$2$ by Lemma~\ref{lemma:TT_TNN_conv}. Another possibility is that the
    terms of the nonzero minor come from both involved series: $a_{m}b_{n+k}>a_{m+k}b_{n}$
    \;or\;~$a_{n+k}b_{m+1}>a_{n}b_{m+1+k}$ for some integers~$k>0$ and~$m\ge n$, so
    automatically~$p(z)\not\equiv 0$ and~$q(z)\not\equiv 0$. In this case, the assumption that
    all minors of~$T(p)$ or~$T(q)$ or order~$2$ are zero yields a contradiction due
    to~$\frac{a_{m}}{a_{m+k}}=\frac{a_{n}}{a_{n+k}}=\frac{b_{m+1}}{b_{m+1+k}}=\frac{b_{n}}{b_{n+k}}=\textit{const}$
    by virtue of Lemma~\ref{lemma:TT_TNN_conv}.
\end{proof}

\begin{lemma}\label{lemma:H_TNN_F_has_form}
    Let the matrices~$T(Ap+Bq)$ and~$T(Aq+B\widetilde p)$ be totally nonnegative for
    all~$A,B\ge 0$, and let~$T(p)$ have a nonzero minor of order~$2$. Then the
    ratio~$F(z)\coloneqq\frac{q(z)}{p(z)}$ is an $\mathcal{S}$-function.
\end{lemma}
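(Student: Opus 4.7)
The plan is to combine the family of Edrei factorisations given by Theorem~\ref{th:E-AESW} with two ray-avoidance conditions for $F$, then extract the $\mathcal{S}$-structure via an Edrei-class analogue of the Hermite-Kakeya-Obreschkoff theorem.

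First, Lemma~\ref{lemma:TT_TNN_conv} yields absolute convergence of $p$, $q$, and both pencils $\phi(z;A,B)\coloneqq Ap(z)+Bq(z)$, $\psi(z;A,B)\coloneqq Aq(z)+B\widetilde p(z)$ in a common annulus~$\mathfrak A$ for all $A,B\ge 0$. The specified nonzero order-$2$ minor of $T(p)=T(\phi(1,0))$ is a nontrivial polynomial in $(A,B)$ when viewed as the corresponding order-$2$ minor of $T(\phi(A,B))$; hence by Theorem~\ref{th:E-AESW} the pencil $\phi(A,B)$ admits the Edrei representation~\eqref{eq:funct_gen_dtps} (with zeros in $(-\infty,0]$) for all $(A,B)$ outside the exceptional variety where every order-$2$ minor of $T(\phi(A,B))$ vanishes. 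On that variety, Lemma~\ref{lemma:TT_TNN_conv} itself forces $\phi$ to be a geometric series, extending to a zero-free rational function. Consequently $\phi(z;A,B)\ne 0$ for every $z\in\mathbb C\setminus(-\infty,0]$ and every nonzero $(A,B)\in\mathbb R_{\ge 0}^2$, and analogously for $\psi$.

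Second, since $p\ne 0$ on $\mathbb C\setminus(-\infty,0]$, the ratio $F=q/p$ is holomorphic and non-vanishing there (zeros of $q$ also lie on $(-\infty,0]$); the factorisations $\phi=p(A+BF)$ and $\psi=p(AF+Bz)$ immediately yield the two pointwise ray-avoidance conditions
\[
F(z)\notin(-\infty,0)\an F(z)/z\notin(-\infty,0)\quad\text{for all } z\in\mathbb C\setminus(-\infty,0],
\]
while positivity of $p,q$ on $\mathfrak A\cap(0,\infty)$ gives $F>0$ there. Writing the Edrei factorisations $p=C_p z^{j_p}e^{A_pz+A_{p,0}/z}P(z)$, $q=C_q z^{j_q}e^{A_qz+A_{q,0}/z}Q(z)$, one has
\[
F(z)=\widetilde C\,z^{j_q-j_p}\exp\!\bigl((A_q-A_p)z+(A_{q,0}-A_{p,0})/z\bigr)\,R(z),
\]
with $R=Q/P$ the ratio of the canonical-product parts. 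An argument-tracking computation along the ray $z=ir$, patterned on the proof of Lemma~\ref{lemma:prop_S_exp}, shows that any nonzero exponential factor would force the continuous branch of $\arg F$ (normalised to~$0$ on $(0,\infty)$) to leave the interval $(-\pi,\pi)$ imposed by $F\notin(-\infty,0)$; hence $A_p=A_q$ and $A_{p,0}=A_{q,0}$. Lemma~\ref{lemma:prop_S} excludes $j_q-j_p\notin\{-1,0\}$ in the same vein.

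The main obstacle is the final structural step: deducing from the two ray-avoidance conditions that the canonical-product ratio $R$ has the form~\eqref{eq:order_alphas_betas} or~\eqref{eq:order_alphas_betas_mer}, i.e.\ that the negative-real zeros of $P$ and $Q$ interlace in the order $\cdots<\alpha_{-2}^{-1}<\beta_{-1}^{-1}<\alpha_{-1}^{-1}<\beta_1<\alpha_1<\cdots$ prescribed by Lemma~\ref{lemma:prop_S1}. This is an Edrei-class analogue of the Hermite-Kakeya-Obreschkoff theorem, and the second pencil $\psi$ is essential here: the condition $F\notin(-\infty,0)$ alone encodes only set-interlacing of zeros, whereas the extra condition $F/z\notin(-\infty,0)$ supplies the alternation around the origin that pins down the correct order. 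A natural strategy is to approximate by finite truncations of the canonical products (for which the polynomial Hermite-Kakeya-Obreschkoff theorem applies directly) and pass to the limit; once interlacing is established, Lemma~\ref{lemma:product_is_S} identifies $F$ as an $\mathcal S$-function.
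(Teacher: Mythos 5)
Your proposal correctly reproduces the first stage of the paper's argument: Lemma~\ref{lemma:TT_TNN_conv} and Theorem~\ref{th:E-AESW} give that the pencils $\phi$ and $\psi$ vanish only on $(-\infty,0]$, so $F$ and $z/F$ are holomorphic, zero-free, and take values outside $(-\infty,0]$ at every $z\notin(-\infty,0]$. (Two small slips: the ray to be avoided is $(-\infty,0]$, not $(-\infty,0)$, as one sees by taking $(A,B)=(0,1)$; and when every order-$2$ minor of $T(\phi)$ vanishes, $\phi$ is a geometric Laurent series, which converges in \emph{no} open annulus and certainly does not extend to a zero-free rational function --- that case is simply vacuous once Lemma~\ref{lemma:TT_TNN_conv} has furnished a nonempty $\mathfrak A$.)

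The genuine gap is the step you yourself flag as ``the main obstacle'': deducing from the two ray-avoidance conditions that the zeros and poles of $F$ are simple and interlace on $(-\infty,0]$. Your proposed route --- truncating the canonical products and invoking the polynomial Hermite--Kakeya--Obreschkoff theorem --- does not go through: HKO requires that $\lambda P_n+\mu Q_n$ have only real zeros for all real $\lambda,\mu$, and the avoidance conditions you proved are facts about the full functions $p$ and $q$, not about partial products of their zeros, so they do not descend to the truncations. The paper instead settles this with a purely local device, Fact~\ref{fact:A} (a description of the level set $\Im h=0$ near a real point subject only to the ray-avoidance hypothesis), applied in turn to $h=F$, $h=z^{-1}F$, $h=1/F$, $h=z/F$, combined with Rolle's theorem between consecutive zeros; that local analysis is the missing core of the proof. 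A secondary issue is your ordering: the paper establishes interlacing \emph{before} ruling out exponential factors, because Lemma~\ref{lemma:prop_S_exp} presupposes the form~\eqref{eq:order_alphas_betas} or~\eqref{eq:order_alphas_betas_mer}; to run the steps in your order one would need a supplementary estimate (that the argument of a genus-zero canonical product is $o(r)$ along $z=ir$) to make sure a nonzero factor $e^{\delta Az+\delta A_0/z}$ cannot be masked by the product parts.
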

\begin{proof}%[Proof of Lemma~\ref{lemma:H_TNN_F_has_form}]
    By Lemma~\ref{lemma:TT_TNN_conv}, the series~$\phi(z;A,B)=Ap(z)+Bq(z)$
    and~$\psi(z;A,B)=Aq(z)+Bzp(z)$ converge in some common annulus. The Toeplitz matrices
    constructed from the coefficients of~$\phi(z;A,B)$ and~$\psi(z;A,B)$ are totally
    nonnegative, and hence the analytic continuations of~$\phi(z;A,B)$ and~$\psi(z;A,B)$ have
    the form~\eqref{eq:funct_gen_dtps}. In particular, all zeros of the functions~$\phi(z;A,B)$
    and~$\psi(z;A,B)$ lie in~$(-\infty,0]$. If~$z_0$ is such
    that~$F(z_0)=\frac{q(z_0)}{p(z_0)}\le 0$,
    then~$\phi(z_0;-F(z_0),1)=-\frac{q(z_0)}{p(z_0)}p(z_0)+q(z_0)=0$. Since for each~$A\ge 0$
    the function~$\phi(z;A,1)$ does not vanish outside~$(-\infty,0]$, the inequality~$z_0\le 0$
    must be true. Analogously, if~$z_1$ is such
    that~$\frac{z_1}{F(z_1)}=\frac{z_1p(z_1)}{q(z_1)}\le 0$,
    then~$\psi\left(z_1;-\frac{z_1}{F(z_1)},1\right)=0$. Since for each~$A\ge 0$ the
    function~$\psi(z;A,1)$ is nonzero outside~$(-\infty,0]$, we obtain~$z_1\le 0$. In
    particular, the function~$F(z)$ has no positive poles or zeros; thus, it is positive and
    holomorphic in~$(0,+\infty)$. In other words, all positive poles of the functions~$p(z)$
    and~$q(z)$ coincide with orders.
    
    \begin{fact}[{Details can be found in \emph{e.g.}~\cite[p.~19]{Duren2004}}]\label{fact:A}
        Let~$h(z)$ be a real function holomorphic in a neighbourhood of a real point $x$ and
        such that~$h(z)\le 0$ for a complex~$z$ implies~$z\le 0$. Then the
        expression~$h(z)-h(x)$ has a zero at~$x$ of some multiplicity~$r\ge 1$.
        Therefore,~$h(z)-h(x)\sim (z-x)^{r}$ as~$z$ is close to~$x$ in a small enough
        neighbourhood of~$x$, and we have~$\Im h(z)=0$ on the union of~$r$ arcs meeting in this
        neighbourhood only at~$x$; one of these arcs is a subinterval of the real line due to
        the reality of~$h(z)$. Furthermore, the half of (if~$r$ is even) or all (if~$r$ is odd)
        the arcs contain an interval where~$h(z)\le h(x)$.
        \emph{In particular, the condition~$h(x)=0$ implies~$r\le 2$, and the condition~$h(x)<0$
            implies~$r=1$.}
    \end{fact}
    Fact~\ref{fact:A} with~$h(z)\coloneqq F(z)$ implies that~$F(z)$ can have at most double
    zeros. It is possible that the function~$F(z)$ is holomorphic at the origin and equal to
    zero there. The assumption that the point~$x=0$ can be a double zero of~$F(z)$ is
    contradictory: Fact~\ref{fact:A} implies that~$F(z)$ is negative for all real~$z\ne0$ small
    enough, which is impossible for~$z>0$. Suppose that~$x<0$ is a double zero of~$F(z)$, that
    is~$F(x)=F'(x)=0\ne F''(x)$. Then~$F(z)<0$ and, therefore,~$z^{-1}F(z)>0$ for all real~$z$
    in a sufficiently small punctured neighbourhood of~$x$. At the point~$x$, the
    function~$z^{-1}F(z)$ has a double zero:
    \[
    \frac{F(x)}x=\frac{F(x)-xF'(x)}{x^2}=0\ne \frac{2x(F(x)-xF'(x))-x^{3}F''(x)}{x^{4}}.
    \]
    Putting~$h(z)\coloneqq z^{-1}F(z)$ in Fact~\ref{fact:A} then yields a contradiction, since
    the inequality~$z^{-1}F(z)\le0$ must be satisfied for all real~$z$ which are close enough
    to~$x$. Consequently, the only possible case is~$r=1$, that is that all zeros of~$F(z)$ are
    simple. Considering in the same way~$h(z)=\frac{z}{F(z)}$ and~$h(z)=\frac{1}{F(z)}$ shows
    that all poles of~$\frac{F(z)}{z}$ are simple. In particular,~$F(z)$ cannot have a pole at
    the origin.
    
    Now, let us prove that zeros and poles of~$F(z)$ are interlacing. Suppose
    that~$x_1<x_2\le 0$ are two consecutive zeros of the function~$F(z)$, such that the
    interval~$(x_1,x_2)$ contains no poles of~$F(z)$. The ratio~$z^{-1}F(z)$ also vanishes
    at~$x_1$ and~$x_2$ unless~$x_2=0$; therefore, Rolle's theorem gives the
    points~$\xi_1,\xi_2\in(x_1,x_2)$ such
    that~$F'(\xi_1)=\xi_2^{-2}(F(\xi_2)-\xi_2F'(\xi_2))=0$. Let~$h(z)\coloneqq F(z)$
    and~$x\coloneqq\xi_1$ if~$F(\xi_1)<0$, or~$h(z)\coloneqq z^{-1}F(z)$ and~$x\coloneqq\xi_2$
    if~$F(\xi_1)>0$. In the special case~$x_2=0$, the function~$F(z)$ is negative
    in~$(x_1,x_2)$, so we put~$h(z)\coloneqq F(z)$ and denote a zero of~$h'(z)$ in this interval
    by~$x$. Fact~\ref{fact:A} implies~$h'(z)\ne 0$ in the whole interval~$x_1<z<x_2\le 0$
    including~$z=x$, which contradicts to our choice of~$x$. This shows that the function~$F(z)$
    has at least one pole between each pair of its zeros. The same argumentation
    for~$\frac {z}{F(z)}$ instead of~$F(z)$ yields that~$F(z)$ has a zero between each pair of
    its poles. As a result, zeros and poles of~$F(z)$ are interlacing.

    Recall that the functions~$p(z)$ and~$q(z)$ can be represented as
    in~\eqref{eq:funct_gen_dtps} and that their poles coincide with orders. These functions
    cannot have distinct exponential factors: otherwise~$F(z)$ gets the corresponding
    exponential factor, so Lemma~\ref{lemma:prop_S_exp} implies that~$F(z_0)<0$ for some~$z_0$
    outside the real line. Therefore,~$F(z)=z^pG(z)$, where~$p$ is an integer and~$G(z)$ has the
    form~\eqref{eq:order_alphas_betas} or~\eqref{eq:order_alphas_betas_mer}. The case
    when~$F(z)$ has the form~$Cz^r$ with~$r\in\mathbb{Z}$ yields~$r=1$ or~$r=0$ because~$F(z)$
    has no poles at the origin and the possible zero at the origin can only be simple. In the
    case~$F(z)\ne Cz^r$, Lemma~\ref{lemma:prop_S} implies~$p=0$ since both functions~$F(z)$
    and~$z^{-1}F(z)$ can attain negative values only on the real line.
\end{proof}
\begin{lemma}\label{lemma:converse}
    If functions~$p(z)$ and~$q(z)$ have the form~\eqref{eq:funct_gen_dtps} and their
    ratio~$F(z)=\frac{q(z)}{p(z)}$ can be represented as in~\eqref{eq:order_alphas_betas}
    or~\eqref{eq:order_alphas_betas_mer}, then the matrix~$H(p,q)$ is totally nonnegative.
\end{lemma}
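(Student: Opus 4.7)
The plan has two main ingredients: an algebraic factorisation that peels off the common factor $g(z)$ of $p$ and $q$, and a truncation argument that reduces everything to the classical polynomial Hurwitz--Hermite--Biehler setting.

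By Lemma~\ref{lemma:prop_S1} we write $p(z)=g(z)P(z)$ and $q(z)=g(z)Q(z)$, where $g$ has the form~\eqref{eq:funct_gen_dtps} and $P$, $Q$ are the ``bare'' interlacing canonical products. A direct computation with the convolution $(gP)_{n}=\sum_{m}g_{m}P_{n-m}$ yields the matrix identity $H(p,q)=H(P,Q)\cdot T(g)$; indeed, for a row $r=2k-1$ one verifies $(gP)_{j-k}=\sum_{l}P_{l-k}\,g_{j-l}=\sum_{l}H(P,Q)_{r,l}\,T(g)_{l,j}$, and the same calculation works for even rows with $Q$ in place of $P$. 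Theorem~\ref{th:E-AESW} guarantees that $T(g)$ is totally nonnegative, so the Cauchy--Binet formula reduces the problem to proving that $H(P,Q)$ is totally nonnegative.

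For this reduced problem I would approximate $P$ and $Q$ by the Laurent polynomials $P_{n}$ and $Q_{n}$ obtained by truncating the interlacing products to $|\nu|,|\mu|\le n$. Theorem~\ref{th:canonical_product} yields that $P_{n}\to P$ and $Q_{n}\to Q$ uniformly on compact subsets of the common annulus of convergence; consequently each Laurent coefficient converges, and so every fixed-order minor of $H(P_{n},Q_{n})$ converges to the corresponding minor of $H(P,Q)$. Thus it is enough to check that each $H(P_{n},Q_{n})$ is totally nonnegative. Multiplying both $P_{n}$ and $Q_{n}$ by a common sufficiently high power of $z$ only shifts the rows of the two-way Hurwitz matrix and preserves its minors, reducing us to an ordinary pair of real polynomials $\tilde P_{n}$, $\tilde Q_{n}$ whose zeros all lie on the negative real axis and interlace as prescribed by Lemma~\ref{lemma:prop_S1}. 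In that setting the associated polynomial $\tilde Q_{n}(z^{2})+z\tilde P_{n}(z^{2})$ is quasi-stable by the classical Hermite--Biehler theorem, so total nonnegativity of its finite Hurwitz matrix is the Asner--Kemperman result cited in the introduction.

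The main technical obstacle will be the bookkeeping of the interlacing pattern at the ``ends'' of the truncated chain, and in particular the edge case from Lemma~\ref{lemma:prop_S1} in which $\beta_{\mu_{0}}=0$ and $Q$ carries an additional monomial factor $z$; a mild perturbation of $\beta_{\mu_{0}}$ away from zero, followed by passage to the limit as in the main approximation step, should take care of this. A secondary but routine matter is justifying that both the matrix identity $H(p,q)=H(P,Q)\,T(g)$ and the subsequent Cauchy--Binet argument make sense for two-way infinite matrices; the absolute convergence of the underlying Laurent series in the common annulus of convergence ensures this without difficulty.
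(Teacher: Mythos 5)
Your proof is correct and follows the same overall architecture as the paper's: factor out the common generating function $g$ via Lemma~\ref{lemma:prop_S1}, establish the identity $H(p,q)=H(p_*,q_*)\cdot T(g)$ with $p_*=p/g$, $q_*=q/g$, approximate $p_*,q_*$ by truncations of the interlacing products, pass to the entry-wise limit of total nonnegativity, and finish with Cauchy--Binet and Theorem~\ref{th:E-AESW}. The one place where you genuinely diverge is in the base step. The paper disposes of the truncated matrices $H(p_n,q_n)$ by a direct citation of Theorem~3.44 of~\cite{HoltzTyaglov} (equivalently Theorem~1.4 of~\cite{Dyachenko14}), a pre-packaged result for pairs of Laurent-polynomial or rational functions whose ratio maps $\Cp$ into itself. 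You instead shift both truncations by a common power of $z$ (a row/column reindexing that preserves every minor of the four-way infinite matrix), land on an ordinary pair of real polynomials with negative interlacing zeros, observe via Hermite--Biehler that $\tilde Q_n(z^2)+z\tilde P_n(z^2)$ is quasi-stable, and then invoke Asner--Kemperman. Both routes are valid; yours is more elementary in that it falls back on the classical polynomial criterion and the Kemperman theorem, which is precisely what the paper does elsewhere in the proof of Theorem~\ref{th:main2}, whereas the paper's citation of the rational-function result is a cleaner one-step appeal. A minor simplification of your argument: the perturbation you propose for the edge case $\beta_{\mu_0}=0$ is unnecessary. A zero of $\tilde Q_n$ at the origin simply makes $\tilde Q_n(z^2)+z\tilde P_n(z^2)$ quasi-stable rather than strictly stable, and the Asner--Kemperman result as cited in the introduction already covers quasi-stable polynomials with positive leading coefficient, so no limiting argument is needed for that case.
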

\begin{proof}
    Indeed, denote by~$p_*(z)\coloneqq \frac{p(z)}{g(z)}\not\equiv 0$
    and~$q_*(z)\coloneqq \frac{q(z)}{g(z)}$ the denominator and numerator of the function~$F(z)$
    given in~\eqref{eq:order_alphas_betas}. This means that~$p_*(z)$ and~$q_*(z)$ have no common
    zeros, no poles and no exponential factors; the function~$g(z)$ has the
    form~\eqref{eq:funct_gen_dtps}. The function~$C\frac{q_n(z)}{p_n(z)}=F_n(z)$ introduced
    in~\eqref{eq:F_is_limit} maps the upper half-plane into itself for each positive
    integer~$n$. According to Theorem~3.44 of~\cite{HoltzTyaglov} (see also Theorem~1.4
    of~\cite{Dyachenko14} where the notation is closer to the current paper) the
    matrix~$H(p_n,q_n)$ is totally nonnegative. Since~$p_n(z)$ and~$q_n(z)$ converge
    in~$\mathbb C\setminus\{0\}$ locally uniformly to~$p_*(z)$ and~$q_*(z)$ respectively, their
    Laurent coefficients converge as well. Therefore, the matrix~$H(p_*,q_*)$ is totally
    nonnegative as an entry-wise limit of totally nonnegative matrices. Then the Cauchy-Binet
    formula implies the total nonnegativity of the
    matrix~$H(p,q)=H(p_*\cdot g,q_*\cdot g)=H(p_*,q_*)\cdot T(g)$, because~$T(g)$ is totally
    nonnegative by Theorem~\ref{th:E-AESW}.
\end{proof}
\begin{proof}[Proof of Theorem~\ref{th:main1}]
    Lemma~\ref{lemma:prop_S1} shows that the
    implications~\eqref{item:m2}$\implies$\eqref{item:m1}
    and~\eqref{item:m1}$\implies$\eqref{item:m3} follow, respectively, from
    Lemma~\ref{lemma:H_TNN_F_has_form} and Lemma~\ref{lemma:converse}. The
    implication~\eqref{item:m3}$\implies$\eqref{item:m2} follows from Lemma~\ref{lemma:T_via_HH}
    and Corollary~\ref{cor:total-nonn-interl}.
\end{proof}

\section{Proofs of Theorems~\ref{th:main2} and~\ref{th:main3}}
There are well-known relations between stable entire functions (more specifically, strongly
stable --- of the class~$\mathcal{HB}$ up to a change of the variable) and mappings of the upper
half of the complex plane into itself, see \emph{e.g.}~\cite{ChebMei,Levin}. In this section, we
adapt these relations to suit our problem; some simplifications arise since we only consider the
real case.

\begin{lemma}[{\emph{cf.}~\cite[pp.~307--308]{Levin}}]\label{lemma:h_lhp_less_h_rhp}
    Let~$q(z^2)+zp(z^2)$ be a non-trivial two-way infinite series. If the Hurwitz
    matrix~\(H(p,q)\) is totally nonnegative and has a nonzero minor of order two, then this
    series converges in some annulus to a function~$g(z^2)h(z)$, where~$g(z)$ generates
    a totally positive sequence,~$h(z)$ is holomorphic for~$z\ne 0$ and (unless it is equal
    identically to a constant) satisfies $|h(z)|> |h(-z)|$ wherever~$\Re z>0$.
\end{lemma}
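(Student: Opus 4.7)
The plan is to combine the product factorization from Lemma~\ref{lemma:prop_S1} with a Hermite--Biehler argument applied to polynomial truncations.

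First, by Theorem~\ref{th:main1} (the implication \eqref{item:m3}$\Rightarrow$\eqref{item:m1}), the series $p(z)$ and $q(z)$ converge in a common annulus to functions of the form~\eqref{eq:funct_gen_dtps}, and $F(z) \coloneqq q(z)/p(z)$ is an $\mathcal{S}$-function. Lemma~\ref{lemma:prop_S1} then yields a function $g(z)$ of the form~\eqref{eq:funct_gen_dtps} such that $p_*(z) \coloneqq p(z)/g(z)$ and $q_*(z) \coloneqq q(z)/g(z)$ are the interlacing products described there. Setting
\[
h(z) \coloneqq q_*(z^2) + z\,p_*(z^2),
\]
one obtains the desired factorization $f(z) = g(z^2)\,h(z)$; here $g$ generates a totally positive sequence by Theorem~\ref{th:E-AESW}, and $h$ is holomorphic on $\mathbb{C}\setminus\{0\}$ because $p_*, q_*$ carry no exponential factors.

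For the modulus inequality I would truncate as in the proof of Lemma~\ref{lemma:product_is_S}: let $p_n, q_n$ be the finite partial products, each with a pole of order $n$ at the origin, and let $h_n(z) \coloneqq q_n(z^2) + z\,p_n(z^2)$. Then $P_n(z) \coloneqq z^{2n} h_n(z)$ is an honest polynomial of the Hermite--Biehler shape $\tilde q_n(z^2) + z\,\tilde p_n(z^2)$, where $\tilde q_n(w) = w^n q_n(w)$ and $\tilde p_n(w) = w^n p_n(w)$ are degree-$2n$ polynomials whose zeros are strictly negative and strictly interlace, thanks to the chain $\cdots<\alpha_{-2}^{-1}<\beta_{-1}^{-1}<\alpha_{-1}^{-1}<\beta_{1}<\alpha_{1}<\cdots$ from Lemma~\ref{lemma:prop_S1}. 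The classical real Hermite--Biehler theorem then declares $P_n$ strictly Hurwitz-stable, from which a factor-by-factor computation on the zeros yields $|P_n(z)| > |P_n(-z)|$ for every $z$ with $\Re z > 0$, hence $|h_n(z)| > |h_n(-z)|$. Locally uniform convergence on $\mathbb{C}\setminus\{0\}$ (as justified in Lemma~\ref{lemma:product_is_S}) produces $|h(z)| \ge |h(-z)|$ in the right half-plane.

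To upgrade this to a strict inequality under the assumption that $h$ is not identically constant, I would study $\phi(z) \coloneqq h(-z)/h(z)$. Repeating the Hermite--Biehler argument for the limit $h$ itself (if $h(z_0) = 0$ with $\Re z_0 > 0$, then $F(z_0^2) = -z_0$, contradicting either $\Im w\cdot\Im F(w) \ge 0$ when $\Im z_0 \ne 0$, or $F(w) \ge 0$ on $(0,\infty)$ when $z_0>0$) shows that $h$ is zero-free on the right half-plane, so $\phi$ is holomorphic there and bounded by~$1$. The reality of the coefficients of $p_*, q_*$ gives $\overline{h(z)} = h(\bar z)$, whence $|\phi|=1$ on the imaginary axis. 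The maximum modulus principle then forces either $|\phi|<1$ strictly, or $\phi$ to be a constant of modulus~$1$; the latter alternative would make $h(-z)=\pm h(z)$, i.e.~$h$ purely even or purely odd, which corresponds to $p_*\equiv0$ or $q_*\equiv0$ and is absorbed into the ``constant'' clause of the statement.

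The main obstacle is the Hermite--Biehler step — specifically, checking that after the substitution $w = z^2$ and multiplication by $z^{2n}$ the interlacing chain of Lemma~\ref{lemma:prop_S1} really produces strictly interlacing, strictly negative zeros of $\tilde p_n$ and $\tilde q_n$ with matching leading signs, so that $P_n$ is strictly Hurwitz-stable rather than only marginally stable. The remaining steps (Edrei's theorem for $g$, holomorphy of $h$, and the max-principle upgrade) are essentially routine.
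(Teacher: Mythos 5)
Your route is genuinely different from the paper's, and it has an unresolved gap that you yourself flag. The paper does not truncate at all: having obtained from Theorem~\ref{th:main1} that $F=q/p$ is an $\mathcal S$-function, it invokes Lemma~\ref{lemma:prop_S} to get that $zp/q$ is also an $\mathcal S$-function, then argues directly that for $0\le\arg z<\pi/2$ the product $\frac{p(z^2)}{q(z^2)}\cdot\frac{z^2p(z^2)}{q(z^2)}$ avoids $(-\infty,0]$, so its principal square root $w(z)=zp(z^2)/q(z^2)$ satisfies $|\arg w(z)|<\pi/2$; complex conjugation handles $-\pi/2<\arg z<0$, so $w$ maps the open right half-plane into itself, and the M\"obius map of that half-plane onto the unit disc gives $\bigl|h(-z)/h(z)\bigr|=\bigl|(1-w)/(1+w)\bigr|<1$ in one stroke. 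Notice this yields the \emph{strict} inequality directly, whereas your truncation-and-limit scheme only produces $|h(z)|\ge|h(-z)|$ and then needs the extra max-modulus upgrade. Moreover, that upgrade has a loose end: if $\phi=h(-z)/h(z)$ is a unimodular constant, you deduce $h$ even or odd, which is \emph{not} ``$h$ constant''; $h$ even forces $p_*\equiv0$, i.e.\ $p\equiv0$, which is a degenerate configuration excluded by the hypotheses of Theorem~\ref{th:main1} ($a_0\ne0$), not absorbed into the constant clause.

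The main gap, however, is exactly what you call ``the main obstacle'': you assert but do not verify that $P_n(z)=\tilde q_n(z^2)+z\tilde p_n(z^2)$ is strictly Hurwitz. Strict interlacing of negative zeros together with matching leading signs is \emph{not} sufficient --- the \emph{orientation} of the interlacing must be right, otherwise $P_n$ is anti-Hurwitz. In your setting, after multiplication by $w^n$, the zeros (in absolute value) of $\tilde p_n$ are $\alpha_{-n}^{-1}<\dots<\alpha_{-1}^{-1}<\alpha_1<\dots<\alpha_n$ and of $\tilde q_n$ are $\beta_{-n}^{-1}<\dots<\beta_{-1}^{-1}<\beta_1<\dots<\beta_n$, and the chain from Lemma~\ref{lemma:prop_S1} places the smallest with $\tilde q_n$ and the largest with $\tilde p_n$ --- which is indeed the right orientation. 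But checking it is precisely where the content of the lemma sits, and the cleanest way to check it is to show $zp_n(z^2)/q_n(z^2)$ maps the right half-plane into itself, which is the paper's own computation applied to the truncation. So the truncation route does not avoid the key step; it defers it while adding the limit passage, the treatment of the terminating/$\beta_{\mu_0}=0$ cases in~\eqref{eq:order_alphas_betas_mer}, and the maximum-modulus upgrade. To make the proposal complete you would need to write out the Hermite--Biehler orientation check (or cite the precise form of the theorem with the orientation clause, e.g.\ from \cite{ChebMei}) rather than leave it as an obstacle.
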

Note that the statement of this lemma implies that the function~$h(z)$ is real
(\emph{i.e}~$h(\overline z)=h(z)$ for all~$z$) and that the function~$g(z^2)$ can only have real
poles and purely imaginary zeros. The converse of Lemma~\ref{lemma:h_lhp_less_h_rhp} to be true
requires a more delicate characterization of the function~$h(z)$, which is introduced in
Theorem~\ref{th:main2}.
\begin{proof} By Theorem~\ref{th:main1}, total nonnegativity of~$H(p,q)$ implies that both
    $p(z)$ and~$q(z)$ are of the form~\eqref{eq:funct_gen_dtps} and their
    ratio~$\frac {q(z)}{p(z)}$ is an $\mathcal S$-function. By Lemma~\ref{lemma:prop_S}, the
    ratio~$\frac {zp(z)}{q(z)}$ is an $\mathcal S$-function as well. If~$\arg$ denotes
    %\setstretch{1.}%
    the principle branch of the argument, then the implication
    \[
    0\le\arg z<\frac \pi 2
    \implies
    -\pi< \arg \frac {p(z^2)}{q(z^2)} \le 0
    \an
    0\le \arg \frac {z^2p(z^2)}{q(z^2)} < \pi
    \]
    yields that the product~$\frac {p(z^2)}{q(z^2)}\cdot\frac {z^2p(z^2)}{q(z^2)}$ cannot be
    negative; the product cannot be zero or infinite by the definition
    of~$\mathcal{S}$-functions. Therefore, the principal value of its square root satisfies
    \[
    -\frac{\pi}2<
    \arg \sqrt{\frac {z^2p^2(z^2)}{q^2(z^2)}}
    =
    w(z)
    < \frac \pi 2,
    \ww
    w(z)
    \coloneqq
    \frac {zp(z^2)}{q(z^2)},
    \]
    on condition that~$0\le\arg z<\frac \pi 2$; the same inequality for~$-\frac \pi 2<\arg z< 0$
    follows by complex conjugation. In other words, the function~$w(z)$ maps the right half of
    the complex plane into itself. (Up to a change of the variable, we got an adaptation
    of~\cite[Lemma~SI.5.1]{KreinKac}.) Taking a linear-fractional transform of the right
    half-plane onto the unit disk gives
    \[
    1>
    \left|\frac {1-w(z)}{1+w(z)}\right|
    =
    \left|\frac {q(z^2)-zp(z^2)}{q(z^2)+zp(z^2)}\right|
    =
    \left|\frac {h(-z)}{h(z)}\right|
    \quad
    \text{as}
    \quad
    \Re z> 0
    .
    %\vspace{-2em}
    \]
\end{proof}
If~$h(z)$ is analytic in~$\mathbb{C}\setminus\{0\}$ and satisfies to~$|h(z)|>|h(-z)|$
as~$\Re z>0$, then~$h(z)$ clearly have no zeros with positive real parts. This conclusion can be
strengthened with the help of the Carleman formula.
\begin{lemma}\label{lemma:h_is_A}
    Suppose that~$h(z)$ is analytic in~$\mathbb{C}\setminus\{0\}$ and satisfies~$|h(z)|>|h(-z)|$
    as~$\Re z>0$. If~$I$ is a subinterval of\/~$\mathbb{Z}$ and~$(\gamma_\nu)_{\nu\in I}$ is
    a sequence of all zeros of~$h(z)$ ordered so
    that~$\nu,\nu+1\in I\implies\Re\gamma_\nu\ge\Re\gamma_{\nu+1}$ (counting with
    multiplicities), then
    \[
    \sum_{\nu\in I,\ \nu\ge0}\left|\Re\frac{1}{\gamma_\nu}\right|
    +
    \sum_{\nu\in I,\ \nu<0}\left|\Re\gamma_\nu\right|<\infty.
    \]
\end{lemma}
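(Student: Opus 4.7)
The plan is to apply Carleman's formula for the right half-plane to two auxiliary functions — once to $H(z):=h(-z)/h(z)$, which will control the zeros of $h$ away from $0$, and once, after the inversion $z\mapsto 1/z$, to the analogue built from $\tilde h(z):=h(1/z)$, which will control the zeros close to $0$. Each application gives a Carleman-type summability bound, and the two together dominate the lemma's expression term by term.

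First, the hypothesis immediately forbids zeros of $h$ in the open right half-plane, so every $\gamma_\nu$ satisfies $\Re\gamma_\nu\le 0$; consequently $H$ is holomorphic on $\{\Re z>0\}$, bounded there by $1$, and has zeros at $\beta_\nu:=-\gamma_\nu$ (for the $\nu$ with $\Re\gamma_\nu<0$). Carleman's formula on the half-disk $\{|z|\le R,\ \Re z\ge 0\}$ reads, schematically,
\[
\sum_{|\beta_\nu|<R}\!\Bigl(\tfrac{1}{|\beta_\nu|}-\tfrac{|\beta_\nu|}{R^2}\Bigr)\cos\arg\beta_\nu
=\tfrac{1}{\pi R}\!\int_{-\pi/2}^{\pi/2}\!\cos\theta\,\log|H(Re^{i\theta})|\,d\theta
+\tfrac{1}{2\pi}\!\int_{-R}^{R}\!\Bigl(\tfrac{1}{y^2}-\tfrac{1}{R^2}\Bigr)\log|H(iy)|\,dy+O(1).
\]
Every summand on the left is nonnegative while both boundary integrals on the right are nonpositive (because $\log|H|\le 0$), so the left-hand side stays bounded uniformly in $R$. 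Letting $R\to\infty$ yields $\sum_\nu|\Re(1/\gamma_\nu)|<\infty$ over all zeros of $h$.

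Second, the substitution $z\mapsto 1/z$ preserves the hypothesis: $\tilde h(z):=h(1/z)$ is holomorphic on $\mathbb{C}\setminus\{0\}$ and $|\tilde h(z)|>|\tilde h(-z)|$ for $\Re z>0$, since $\Re(1/z)=\Re z/|z|^2$ has the same sign as $\Re z$. Its zeros are $1/\gamma_\nu$, so the preceding argument applied to $\tilde h$ gives $\sum_\nu|\Re\gamma_\nu|<\infty$. Because each $\nu$ contributes exactly one of $|\Re(1/\gamma_\nu)|$ or $|\Re\gamma_\nu|$ to the lemma's sum, it is bounded above by $\sum_\nu\bigl[\,|\Re(1/\gamma_\nu)|+|\Re\gamma_\nu|\,\bigr]$, which is finite by the two Carleman bounds; this finishes the proof irrespective of where the split $\nu\ge 0$ versus $\nu<0$ is placed in the real-part ordering.

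The main obstacle is keeping the Carleman bookkeeping rigorous at the origin. Since $h$ is only analytic on $\mathbb{C}\setminus\{0\}$, the function $H$ may have an isolated singularity at $z=0$, and in the inversion step the zeros of $\tilde h$ may accumulate at $0$ so that $\tilde H:=\tilde h(-z)/\tilde h(z)$ has zeros accumulating at a boundary point of the right half-plane. One applies Carleman on a half-annulus $\{\epsilon<|z|<R,\ \Re z>0\}$, controls the inner-arc contribution via the same $|H|\le 1$ (respectively $|\tilde H|\le 1$) bound that makes it nonpositive, and then passes to the limits $\epsilon\to 0$ and $R\to\infty$; the nonpositivity of every boundary integral is precisely what forces the Blaschke-type left-hand side to remain finite throughout.
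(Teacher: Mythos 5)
The overall architecture mirrors the paper: apply Carleman's formula twice, once to $h(-z)/h(z)$ directly and once after the inversion $z\mapsto 1/z$, and use the bound $|H|\le 1$ to force the Blaschke sum to be finite. Up to a rotation (upper half-plane in the paper versus right half-plane in your version) the tool is the same. But there is a genuine error in what you claim each application delivers, and it occurs exactly at the point you flag as the ``main obstacle.''

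You assert that the first application gives $\sum_{\nu}\bigl|\Re(1/\gamma_\nu)\bigr|<\infty$ over \emph{all} zeros of $h$, and that the second gives $\sum_{\nu}\bigl|\Re\gamma_\nu\bigr|<\infty$ over all zeros. Neither conclusion is true, and the lemma does not claim them. Take, for instance, $h(z)=\prod_{k\ge 1}\bigl(1+z/k^2\bigr)$: each factor satisfies $|1+z/k^2|>|1-z/k^2|$ for $\Re z>0$, so $h$ satisfies the hypothesis, yet $\sum_k|\Re\gamma_k|=\sum_k k^2=\infty$. Dually, $h(z)=\prod_{k\ge 1}\bigl(1+1/(zk^2)\bigr)$ again satisfies the hypothesis but has $\sum_k\bigl|\Re(1/\gamma_k)\bigr|=\sum_k k^2=\infty$. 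So the sum $\sum_\nu\bigl[\,|\Re(1/\gamma_\nu)|+|\Re\gamma_\nu|\,\bigr]$ that you invoke at the end can be infinite even though the lemma's conclusion holds.

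The step that fails is the limit $\epsilon\to 0$ on the half-annulus. The inner-arc contribution in Carleman's formula (the paper's $A_{\xi,R}$) is an integral of the \emph{complex} logarithm $\ln f$ against a kernel of size $\sim 1/\epsilon$ on $|z|=\epsilon$; the bound $|H|\le 1$ does not make this term nonpositive, and it need not remain bounded as $\epsilon\to 0$ when $H$ has a genuine singularity there (which it does whenever $h$ has infinitely many zeros accumulating at the origin). The paper avoids this entirely: it fixes the inner radius $\xi>0$, notes only that $A_{\xi,R}$ stays bounded as $R\to\infty$, and thus obtains a bound on $\sum_{r_\nu>\xi}|\Re(1/\gamma_\nu)|$ — the zeros outside the inner circle only. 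The second application, via the inversion, similarly bounds the contribution from zeros inside the circle of radius $1/\xi$. Each application therefore yields a \emph{partial} sum, and together (since the ordering by decreasing real part sends $\nu\to -\infty$ to zeros accumulating at $0$ and $\nu\to +\infty$ to zeros escaping to $\infty$, so only finitely many indices sit on the ``wrong'' side of any fixed $\xi$) they give exactly the split conclusion of the lemma. To repair your proof you should drop the $\epsilon\to 0$ limit, keep $\epsilon$ fixed, and accept that each Carleman application bounds only the zeros lying outside a fixed circle; the two applications then cover the two halves of the lemma's sum.
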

\begin{proof}
    Let $0<\xi<R$ be such that a function~$f(z)$ is analytic in the
    semi-annulus~$\xi\le|z|\le R$, \ $\Im z\ge 0$ and nonzero on its boundary. The Carleman
    formula (see e.g.~\cite[p.~224]{Levin} or~\cite[p.~153]{ChebMei}) for the function~$f(z)$
    can be written as
    \[
    \sum_{\nu\in J} \left(\frac 1{r_\nu} - \frac{r_\nu}{R^2}\right)\sin\theta_\nu
    = \frac 1{\pi R}\int_{0}^\pi \ln|f(Re^{i\theta})|\sin\theta d\theta
    + \frac 1{\pi R}\int_{\xi}^R \left(\frac 1{x^2}-\frac{1}{R^2}\right) \ln|f(x)f(-x)|dx
    + A_{\xi,R},    
    \]
    where~$\{r_\nu e^{i\theta_\nu}\}_{\nu\in J}$ is the set of all zeros counted with multiplicities,
    which~$f(z)$ has in this semi-annulus; the number~$A_{\xi,R}$ is defined as
    \[
    A_{\xi,R}=\Im \frac 1{\pi R}\int_{\pi}^0 \ln f(\xi e^{i\theta})\left(
        \frac 1{R^2} - \frac{e^{-2i\theta}}{\xi^2}\right)\xi e^{i\theta} d\theta.
    \]
    Note that~$A_{\xi,R}$ remains bounded if~$R$ grows to infinity. So, if~$f(z)$ is
    bounded, then putting~$R\to+\infty$ in the Carleman formula yields that the series
    \[
    \sum_{r_\nu>\xi} \left|\frac {\sin\theta_\nu}{r_\nu} - \frac{r_\nu\sin\theta_\nu}{R^2}\right|
    \quad
    \text{and, hence,}
    \quad
    \sum_{r_\nu>\xi} \left|\frac {\sin\theta_\nu}{r_\nu}\right|=
    \sum_{r_\nu>\xi} \left|\Im\frac {1}{r_\nu e^{i\theta_\nu}}\right|
    \]
    must be convergent.

    Chose~$\xi$ so that~$|\gamma_\nu|\ne\xi$ for all~$\nu\in I$. Within the settings
    \( f(iz)\coloneqq \frac{h(-z)}{h(z)} \) or~\( f(iz^{-1})\coloneqq \frac{h(-z)}{h(z)} \), we
    are getting~$|f(z)|\le1$ if~$\Im z\ge0$; thus, the above formula implies the convergence of
    the series~$ \sum_{\nu\in I,\ \nu\ge0}\left|\Re\frac{1}{\gamma_\nu}\right|$
    and~$\sum_{\nu\in I,\ \nu<0}\left|\Re\gamma_\nu\right|$, respectively.
\end{proof}
\begin{proof}[Proof of Theorem~\ref{th:main2}]
    Assume that~$H(p,q)$ is totally nonnegative; by Lemma~\ref{lemma:prop_S1} and
    Theorem~\ref{th:main1}, there is a function~$g(z)$ of the form~\eqref{eq:funct_gen_dtps}
    such that the ratios~$\frac{p(z)}{g(z)}$ and~$\frac{q(z)}{g(z)}$ represent functions with no
    common zeros. Moreover, according to Theorem~\ref{th:canonical_product} the estimate
    \[
        \max_{1 \le |z|\le R}\left(\left|\frac{p(z)}{g(z)}\right|
            +\left|\frac{p(z^{-1})}{g(z^{-1})}\right|\right)+
        \max_{1 \le |z|\le R}\left(\left|\frac{q(z)}{g(z)}\right|
            +\left|\frac{q(z^{-1})}{g(z^{-1})}\right|\right)
        <e^{\varepsilon R}
    \]
    holds true for an arbitrary~$\varepsilon>0$ and for~$R>1$ big enough; therefore,
    \begin{equation}\label{eq:f_estimate_0_inf}
        \max_{R^{-1}\le |z|\le R}\left|\frac{f(z)}{g(z^2)}\right|
        <e^{\varepsilon R^{2}}.
    \end{equation}
    Unless~$h(z)\coloneqq \frac{f(z)}{g(z^2)}$ is a constant, by
    Lemma~\ref{lemma:h_lhp_less_h_rhp} it satisfies~$|h(z)|>|h(-z)|$ in the right half of the
    complex plane. Since~$\frac{p(z)}{g(z)}$ and~$\frac{q(z)}{g(z)}$ do not vanish
    simultaneously, the function~$h(z)=\frac{q(z^2)}{g(z^2)}+z\frac{p(z^2)}{g(z^2)}$ has no
    purely imaginary zeros. Thus, the estimate~\eqref{eq:f_estimate_0_inf} implies the following
    representation:
    \begin{equation}\label{eq:h_is_f_over_g}
        h(z)
        =
        z^re^{Bz+\frac{B_0}z}
        \prod_{\lambda>0} \left(1+\frac{z}{\xi_\lambda}\right)
        \cdot
        \prod_{\nu>0} \left(1+\frac{z}{\gamma_\nu}\right)\left(1+\frac{z}{\overline \gamma_\nu}\right)
        \cdot
        \prod_{\lambda<0} \left(1+\frac{z^{-1}}{\xi_\lambda}\right)
        \cdot
        \prod_{\mu<0} \left(1+\frac{z^{-1}}{\gamma_\mu}\right)
                     \left(1+\frac{z^{-1}}{\overline \gamma_\mu}\right),
    \end{equation}
    where~$B,B_0\in\mathbb{R}$: the involved products are convergent since the
    sums~$\sum_{\nu>0}\left|\Re\gamma_\nu\right|$, \
    $\sum_{\nu<0}\left|\Re\gamma_\nu^{-1}\right|$, \ $\sum_{\lambda>0}\left|\Re \xi_\lambda\right|$
    and~$\sum_{\lambda<0}\left|\Re \xi_\lambda\right|$ are finite by Lemma~\ref{lemma:h_is_A}. In
    particular, for~$x>0$ we have
    \[
    \begin{aligned}
        h(x)&=
        x^re^{Bx+\frac{B_0}x}
        \prod_{\lambda>0} \left(1+\frac{x}{\xi_\lambda}\right)
        \cdot
        \prod_{\nu>0} \left(1+\frac{x}{\gamma_\nu}\right)
        \overline{\left(1+\frac{x}{\gamma_\nu}\right)}
        \cdot
        \prod_{\lambda<0} \left(1+\frac{x^{-1}}{\xi_\lambda}\right)
        \cdot
        \prod_{\mu<0} \left(1+\frac{x^{-1}}{\gamma_\mu}\right)
        \overline{\left(1+\frac{x^{-1}}{\gamma_\mu}\right)}\\
        &=
        x^r
        e^{Bx+\frac{B_0}x}
        \prod_{\lambda>0} \left(1+\frac{x}{\xi_\nu}\right)\cdot
        \prod_{\nu>0} \left|1+\frac{x}{\gamma_\nu}\right|^2\cdot
        \prod_{\lambda<0} \left(1+\frac{x^{-1}}{\xi_\lambda}\right)\cdot
        \prod_{\mu<0} \left|1+\frac{x^{-1}}{\gamma_\mu}\right|^2\!\!.
    \end{aligned}
    \]
    On the one hand, Theorem~\ref{th:canonical_product} for each~$\varepsilon>0$ implies
    \begin{equation*}
        |x|^r
        \prod_{\lambda>0} \left|1+\frac{x}{\xi_\lambda}\right|\cdot
        \prod_{\nu>0} \left|1+\frac{x}{\gamma_\nu}\right|^2\cdot
        \prod_{\lambda<0} \left|1+\frac{x^{-1}}{\xi_\lambda}\right|\cdot
        \prod_{\mu<0} \left|1+\frac{x^{-1}}{\gamma_\mu}\right|^2
        <e^{2\varepsilon\left|x+\frac1x\right|}
    \end{equation*}
    when~$\left|x+\frac 1x\right|$ is big enough; so, the
    ratio~$\frac{h(-x)}{h(x)}\sim e^{-2Bx-\frac{2B_0}x}$ grows to infinity as~$x\to+\infty$ or
    as~$x\to0+$ unless both conditions~$B\ge 0$ and~$B_0\ge0$ are satisfied. On the other hand,
    $\left|\frac{h(-x)}{h(x)}\right|<1$ for any~$x>0$ by Lemma~\ref{lemma:h_lhp_less_h_rhp}. As
    a result, the only consistent case is that~$f(z)$ can be represented as
    in~\eqref{eq:f_weierstrass}.

    Conversely, let $f(z)$ have the form~\eqref{eq:f_weierstrass}. Then there exists a
    function~$g(z)$ of the form~\eqref{eq:funct_gen_dtps} such that the
    ratio~$\frac{f(z)}{g(z^2)}$ satisfies~\eqref{eq:h_is_f_over_g} with~$B,B_0\ge0$. The
    polynomials
    \[
    \begin{aligned}
        h_n(z)
        =
        \left(1+\frac{Bz}n\right)^n\left(1+\frac{B_0z}{n}\right)^n
        &\cdot\!\!
        \prod_{0<\lambda\le n} \left(1+\frac{z}{\xi_\lambda}\right)
        \cdot\!\!
        \prod_{0<\nu\le n} \left(1+\frac{z}{\gamma_\nu}\right)\left(1+\frac{z}{\overline \gamma_\nu}\right)\\
        &\cdot\!\!\!\!
        \prod_{0>\lambda\ge -n} \left(z+\frac{1}{\xi_\lambda}\right)
        \cdot\!\!\!\!
        \prod_{0>\nu\ge-n} \left(z+\frac{1}{\gamma_{\nu}}\right)
                         \left(z+\frac{1}{\overline \gamma_{\nu}}\right)
    \end{aligned}
    \]
    are stable for each positive integer~$n$; if~$P_{n}(z)$ and~$Q_{n}(z)$ are defined
    by~$P_{n}(z^2)\coloneqq \frac 1{2z} (h_n(z) - h_n(-z))$
    and~$Q_{n}(z^2)\coloneqq \frac 12 (h_n(z) + h_n(-z))$, then the four-way infinite Hurwitz
    matrix~$H(P_{n},Q_{n})$ corresponding to~$h_n(z)$ is totally nonnegative by the Kemperman
    theorem~\cite{Kemperman,Dyachenko14}. Furthermore, the Hurwitz matrix generated by the
    rational functions
    \[
    \begin{aligned}
        z^r
        \left(1+\frac{Bz}n\right)^n\left(1+\frac{B_0z^{-1}}{n}\right)^n
        &\cdot\!\!
        \prod_{0<\lambda\le n} \left(1+\frac{z}{\xi_\lambda}\right)
        \cdot\!\!
        \prod_{0<\nu\le n} \left(1+\frac{z}{\gamma_\nu}\right)\left(1+\frac{z}{\overline \gamma_\nu}\right)\\
        &\cdot\!\!\!
        \prod_{0>\lambda\ge -n} \left(1+\frac{z^{-1}}{\xi_\lambda}\right)
        \cdot\!\!\!
        \prod_{0>\nu\ge-n} \left(1+\frac{z^{-1}}{\gamma_{\nu}}\right)
                         \left(1+\frac{z^{-1}}{\overline \gamma_{\nu}}\right)
    \end{aligned}
    \]
    coincides with~$H(P_{n},Q_{n})$ up to a shift in its indexation and, hence, it is also
    totally nonnegative. Since these rational functions converge to~$h(z)$ in every annulus
    centred at the origin as~$n\to\infty$, their Laurent coefficients converge to the
    coefficients of~$h(z)$. Thus, each minor of the matrix~$H(P,Q)$,
    where~$P(z^2)\coloneqq \frac 1{2z} (h(z) - h(-z))$
    and~$Q(z^2)\coloneqq \frac 12 (h(z) + h(-z))$, is nonnegative as a limit of nonnegative
    minors of~$H(P_{n},Q_{n})$. Recall that all minors of~$T(g)$ are nonnegative by
    Theorem~\ref{th:E-AESW}, so the Cauchy-Binet formula and the identity
    \[
    H(p,q)=H(P\cdot g,Q\cdot g)=H(P,Q)\cdot T(g)
    \]
    imply the total nonnegativity of~$H(p,q)$.
\end{proof}
\begin{proof}[Proof of Theorem~\ref{th:main3}]
    We reproduce the original proof of~\cite[Theorem~4]{HoltzKushelKhrushchev16} with minimal
    changes. Without loss of generality suppose that~$p_0(z)\not\equiv0$ and that~$H(p_m,p_0)$
    has a nonzero minor for some~$m>0$, which can be achieved by multiplying~$f(z)$ by some
    power of~$z$. By Theorem~\ref{th:main1},~$p_0(z)$ must be a non-trivial series convergent in
    some non-empty annulus and all other series~$p_1(z),\dots,p_{M-1}(z)$ converge in the same
    annulus (or trivial) since the corresponding matrices~$H(p_1,p_0),\dots,H(p_{M-1},p_0)$ are
    totally nonnegative. Let us keep the notation~$p_0(z),p_1(z),\dots,p_{M-1}(z)$ for functions
    represented by the same-name series. Then Theorem~\ref{th:main1} additionally implies that
    all poles and exponential factors of the functions~$p_0(z),p_1(z),\dots,p_{M-1}(z)$
    coincide. In other words, there exists a function~$g(z)$ of the
    form~\eqref{eq:funct_gen_dtps} such that the
    ratios~$\frac{p_0(z)}{g(z)},\dots,\frac{p_{M-1}(z)}{g(z)}$ can be represented as
    \[
    a_0{\prod_{\nu>0} \left(1+\dfrac{z}{\alpha_\nu}\right)}
    {\prod_{\nu<0}\left(1+\dfrac{z^{-1}}{\alpha_\nu}\right)}\quad\text{or}\quad
    C(z+\alpha_0)\prod_{\nu>0} \left(1+\dfrac{z}{\alpha_\nu}\right)
    \]
    with positive coefficients (see Lemma~\ref{lemma:prop_S1}). In particular, all these ratios
    behave at most subexponentially as~$z\to0+$ and as~$z\to+\infty$ by
    Theorem~\ref{th:canonical_product}; therefore, we can chose two entire functions~$q_1(z)$
    and~$q_2(z)$ of genus less than~$M$, so that~$f(z)$ has the required
    factorization~$g(z^M)\cdot q_1(z)\cdot q_2\left(\frac 1z\right)$.

    Let us prove that the sector~$C_M$ contains no zeros of~$f(z)$. On the one hand, for
    any~$z\in \big\{\zeta\in\mathbb C:0\le\arg \zeta<\frac{\pi}{M}\big\}$ and integers~$n,m$
    satisfying~$0\le n<m<M$, Theorem~\ref{th:main1} yields the
    inequality
    \[
        0\le \arg\frac{p_n(z^M)}{p_m(z^M)}< \pi,
        \quad\text{that is}\quad
        -\pi< \arg\frac{p_m(z^M)}{p_n(z^M)}\le 0.
    \]
    Since
    \[
    0 \le \arg z^{m-n} = (m-n) \arg z < \frac{m-n}{M}\pi<\pi,
    \]
    the argument (its principal value) of the product~$z^{m-n}\cdot\frac{p_m(z^M)}{p_n(z^M)}$ is
    equal to the sum of arguments of the factors, and thus
    \begin{equation}\label{eq:gen_hurw_ineq}
    \begin{aligned}
        (m-n)\arg z-\pi&<\arg\frac{z^{m-n}p_m(z^M)}{p_n(z^M)}\le(m-n)\arg z
        &&\quad\text{if}\quad m>n
        .
    \end{aligned}
    \end{equation}
    On the other hand, suppose that the condition
    \( 0=f(z)=p_0(z^M)+zp_1(z^M)+\dots+z^{M-1}p_{M-1}(z^M) \) holds true for some~$z$ varying in
    the chosen sector, which is is equivalent to
    \begin{equation}\label{eq:gen_hurw_eq}
        \sum_{n=1}^{M-1}\frac{z^np_n(z^M)}{p_0(z^M)}=-1
    \end{equation}
    due to~$p_0(z^M)\ne 0$. There are at least two nonzero summands on the left-hand side:
    otherwise this equality would contradict to~\eqref{eq:gen_hurw_ineq}. Let~$m$ and~$n$ be the
    indices of the nonzero summands with maximal and minimal arguments, respectively. Among the
    inequalities~\eqref{eq:gen_hurw_ineq}, we have
    \[
    m\arg z-\pi<
    \arg\frac{z^{m}p_m(z^M)}{p_0(z^M)}\le m\arg z
    \an
    n\arg z-\pi<
    \arg\frac{z^{n}p_n(z^M)}{p_0(z^M)}\le n\arg z,
    \]
    and therefore
    \[
    0=\max\big\{0,(m-n)\arg z-\pi\big\}
    \le
    \arg\frac{z^m p_m(z^M)}{p_0(z^M)}-\arg\frac{z^n p_n(z^M)}{p_0(z^M)}
    <
    (m-n)\arg z+\pi.
    \]
    Then the inequality
    \[
    \pi<\arg\frac{z^m p_m(z^M)}{p_0(z^M)}-\arg\frac{z^n p_n(z^M)}{p_0(z^M)}    
    \]
    implies~$m>n$ and
    \[
    \pi<
    \arg\frac{z^{m-n}p_m(z^M)}{p_n(z^M)}
    +2\pi<(m-n)\arg z+\pi,
    \quad\text{that is}
    \quad
    -\pi<
    \arg\frac{z^{m-n}p_m(z^M)}{p_n(z^M)}
    <(m-n)\arg z-\pi,
    \]
    which is inconsistent with~\eqref{eq:gen_hurw_ineq}; the reverse
    inequality
    \[
        \pi\ge\arg\frac{z^mp_m(z^M)}{p_0(z^M)}-\arg\frac{z^np_n(z^M)}{p_0(z^M)}
    \]
    implies that the
    cone~$\{0\}\cup\Big\{\zeta\in\mathbb{C}\setminus\{0\}:\arg\frac{z^mp_m(z^M)}{p_0(z^M)}\ge
    \arg\zeta\ge\arg\frac{z^np_n(z^M)}{p_0(z^M)}\Big\}$ is convex. This cone contains also all the
    ratios~$\frac{z^kp_k(z^M)}{p_0(z^M)}$, where~$k=1,\dots,M-1$, and hence
    \[
    \pi>\arg\frac{z^mp_m(z^M)}{p_0(z^M)}
    \ge\arg\sum_{k=1}^{M-1}\frac{z^kp_k(z^M)}{p_0(z^M)}
    \ge\arg\frac{z^np_n(z^M)}{p_0(z^M)}>-\pi,
    \]
    which contradicts to~\eqref{eq:gen_hurw_eq}. Consequently, there are no points~$z$ in the
    sector~$C_M\cap\{\Im z\ge0\}$ such that~$f(z)=0$. Complex conjugation gives that solutions
    to~$f(z)=0$ cannot belong to~$C_M\cap\{\Im z<0\}$ as well.
\end{proof}
\addsec{Acknowledgements}
The author is very grateful to Mikhail Tyaglov for his idea of Theorem~\ref{th:main3} and to
Olga Holtz for two questions which gave rise to the present publication.
%\vfill \eject

%\cohead{References}
{%\setstretch{1}

}
\end{document}